\newtheorem{theorem}{Theorem}
\newtheorem{lemma}[theorem]{Lemma}
\numberwithin{equation}{section}
\numberwithin{theorem}{section}
\newcommand{\RR}{\mathbb{R}}
\newcommand{\QQ}{\mathbb{Q}}
\newcommand{\calH}{\mathcal{H}}
\newcommand{\ZZ}{\mathbb{Z}}
\newcommand{\vol}{\mathrm{vol}}
\newcommand{\calC}{\mathcal{C}}
\newcommand{\NN}{\mathbb{N}}
\newcommand{\CC}{\mathbb{C}}
\renewcommand\Re{\textnormal{Re}}
\newcommand{\SL}{\mathrm{SL}}
\renewcommand{\pmod}[1]{\,\left(\mathrm{mod}\,#1\right)}
\title{Asymptotic identities for additive convolutions of sums of divisors}
\author{Robert J. Lemke Oliver}
\address{Department of Mathematics, Tufts University, 503 Boston Ave, Medford, MA 02155}
\email{robert.lemke\_{}oliver@tufts.edu}
\author{Sunrose T. Shrestha}
\address{Department of Mathematics and Computer Science, Wesleyan University, 45 Wyllys Ave, Middletown, CT 06459}
\email{sunrose.shrestha@gmail.com}
\author{Frank Thorne}
\address{Department of Mathematics, University of South Carolina, 1523 Greene St, Columbia, SC 29201}
\email{thorne@math.sc.edu}
\begin{document}

\begin{abstract}
In a 1916 paper, Ramanujan studied the additive convolution $S_{a, b}(n)$ of sum-of-divisors functions $\sigma_a(n)$ and $\sigma_b(n)$, and proved an asymptotic
formula for it when $a$ and $b$ are positive odd integers. He also conjectured that his asymptotic formula should hold for all positive 
real $a$ and $b$. Ramanujan's conjecture was subsequently proved by Ingham, and then by Halberstam with a power saving error term.

In this paper, we give a new proof of Ramanujan's conjecture that obtains lower order terms in the asymptotics for most ranges of the parameters.  We also describe a connection to a counting problem in geometric topology that was studied in the second author's thesis and which
served as our initial motivation in studying this sum.

\end{abstract}

\maketitle

\section{Introduction}

For any integer $a$, let $\sigma_a(n)$ denote the sum of the $a$th powers of the divisors of $n$, that is,
\[
\sigma_a(n) = \sum_{d\mid n} d^a.
\]
While the particular value of $\sigma_a(n)$ depends crucially on the divisibility properties of $n$, there are nevertheless many beautiful identities dating back to a 1916 paper of Ramanujan \cite{Ramanujan} relating additive convolutions of some of these functions to others.  For positive integers $a$ and $b$, let
\[
S_{a,b}(n) := \sum_{k=1}^{n-1} \sigma_a(k) \sigma_b(n-k).
\]
Perhaps the most well-known identity is
\[
S_{3,3}(n) = \frac{1}{120}\sigma_7(n) + \frac{1}{120}\sigma_3(n)
\]
but Ramanujan establishes eight other exact identities of this type.  He also establishes the asymptotic identity
\begin{equation} \label{eqn:ramanujan}
S_{a,b}(n) = \frac{\Gamma(a+1)\Gamma(b+1)}{\Gamma(a+b+2)} \frac{\zeta(a+1)\zeta(b+1)}{\zeta(a+b+2)} \sigma_{a+b+1}(n) - \frac{1}{2}\zeta(-a)\sigma_b(n)+ O(n^{\frac{2}{3}(a+b+1)}) 
\end{equation}
for all positive odd integers $b\geq a>1$; there is an analogous formula with an additional lower order term if either $a$ or $b$ is equal to $1$.  The error term in the above asymptotic is related to the Fourier coefficients of holomorphic modular forms on $\mathrm{SL}_2(\mathbb{Z})$, and today, Ramanujan's paper is most famous for being the origin of the celebrated Ramanujan conjectures on the properties and size of these coefficients.

At the top of the second page of his paper, however, Ramanujan remarks, ``It seems very likely that (the main part of the asymptotic in \eqref{eqn:ramanujan}) is true for all positive (real) values of $a$ and $b$, but this I am at present unable to prove.''  This less well known conjecture of Ramanujan was established in 1927 by Ingham \cite{Ingham}, and then with a power saving error term 
in 1957 by Halberstam \cite{halberstam}.   Halberstam later \cite{halberstam2} proved that if both parameters are small, in that they satisfy $a+b<1$, then there is a secondary term given by a different expression in this asymptotic formula.  This formula does not, however, recover the secondary term in Ramanujan's formula \eqref{eqn:ramanujan}, both owing to its different formulation and to the requirement that $a+b<1$.

\medskip 
In this paper we give another proof of the asymptotic in \eqref{eqn:ramanujan}, improving upon the result
by establishing lower-order terms in the asymptotic for many ranges of the parameters that recover Ramanujan's secondary term. We begin with the following theorem on what is typically the largest of these lower order terms.

\begin{theorem} \label{thm:st1}
	If $a$ and $b$ are positive real numbers with $b>a \geq 1$, then
	\begin{align*}
		S_{a,b}(n) 
			&= \frac{\Gamma(a+1)\Gamma(b+1)}{\Gamma(a+b+2)}\frac{\zeta(a+1)\zeta(b+1)}{\zeta(a+b+2)}\sigma_{a+b+1}(n)  + \frac{\zeta(1-a)\zeta(b+1)}{(b+1)\zeta(b-a+2)}n^a\sigma_{b-a+1}(n) \\
			& \quad\quad+ O(n^b) + O(n^{\frac{a+b}{2}+1+\epsilon}).
	\end{align*}
\end{theorem}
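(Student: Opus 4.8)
The plan is to pass to the generating function and extract coefficients by a contour/Mellin analysis concentrated near roots of unity. Writing $\sigma_a$ as a Lambert series, one has $\sum_{k\ge1}\sigma_a(k)x^k = \sum_{d\ge1}\frac{d^a x^d}{1-x^d}=:L_a(x)$, and since $S_{a,b}(n)$ is the additive convolution of the two coefficient sequences, $\sum_{n\ge1}S_{a,b}(n)x^n = L_a(x)L_b(x)$. Thus $S_{a,b}(n) = \frac{1}{2\pi i}\oint L_a(x)L_b(x)\,x^{-n-1}\,dx$ over a small circle about the origin, and I would deform this contour out toward the unit circle and perform a Farey (major-arc) dissection, so that the dominant contributions come from neighborhoods of the roots of unity $x = e^{2\pi i p/q}$ with $q$ small.

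Near such a point, writing $x = e^{2\pi i p/q}e^{-t}$ with $t\to 0^+$, the local behavior of $L_a$ is governed by the Mellin transform $\int_0^\infty L_a(e^{2\pi i p/q}e^{-t})\,t^{s-1}\,dt = \Gamma(s)\,E_a(s;p/q)$, where $E_a(s;p/q) = \sum_{N\ge1}\sigma_a(N)e^{2\pi i pN/q}N^{-s}$ is an Estermann-type zeta function. I would use its meromorphic continuation: $E_a$ has simple poles only at $s=a+1$ and $s=1$, so shifting the Mellin contour past them yields an expansion $L_a(e^{2\pi i p/q}e^{-t}) = A_a(p,q)\,t^{-(a+1)} + B_a(p,q)\,t^{-1} + O(1)$, where $A_a(p,q) = \Res_{s=a+1}\Gamma(s)E_a(s;p/q)$ carries the factor $\Gamma(a+1)\zeta(a+1)$, while $B_a(p,q) = \Res_{s=1}\Gamma(s)E_a(s;p/q)$ carries the factor $\zeta(1-a)$ visible in the secondary term of the theorem; both residues are decorated by $q$-powers and Ramanujan-type sums.

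Multiplying the local expansions of $L_a$ and $L_b$ and integrating term by term — using that a singularity $\sim C\,t^{-\beta}$ contributes $\sim \frac{C}{\Gamma(\beta)}n^{\beta-1}e^{-2\pi i pn/q}$ to the $n$th coefficient — I expect the product $A_a\cdot A_b$ of leading terms ($t^{-(a+1)}\cdot t^{-(b+1)}$) to produce the main term of order $n^{a+b+1}$ with the Beta factor $\frac{\Gamma(a+1)\Gamma(b+1)}{\Gamma(a+b+2)}$, and, since $b>a$, the cross term $B_a\cdot A_b$ ($t^{-1}\cdot t^{-(b+1)}$) to produce the secondary term of order $n^{b+1}$ with coefficient $\frac{\zeta(1-a)\zeta(b+1)}{b+1}$. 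Summing the arithmetic factors and the phases $e^{-2\pi i pn/q}$ over $p$ and $q$ turns these into Ramanujan-sum series; via $\sum_{q\ge1}c_q(n)q^{-w} = \frac{\sigma_{w-1}(n)}{n^{w-1}\zeta(w)}$ the main term acquires the factor $\frac{\sigma_{a+b+1}(n)}{n^{a+b+1}\zeta(a+b+2)}$ and the secondary term acquires $\frac{\sigma_{b-a+1}(n)}{n^{b-a+1}\zeta(b-a+2)}$, exactly matching the stated formula (the latter upgrading $n^{b+1}$ to $n^a\sigma_{b-a+1}(n)$).

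The hard part will be the error analysis. I expect two sources: the lower poles (at $s=0$ and the sub-subleading interactions) together with the tails of the singular series should contribute $O(n^b)$, while the principal difficulty is bounding the integral away from the small-denominator major arcs — the minor arcs and the intermediate range $q\asymp n^{1/2}$ — where one must show $L_a(x)L_b(x)$ is small enough to yield the power-saving $O(n^{\frac{a+b}{2}+1+\epsilon})$. This requires estimates for the Estermann function (equivalently for $L_a$ near the unit circle) that are uniform in the $q$-aspect, together with a careful treatment of how neighboring arcs are glued. Since $a$ and $b$ are real and need not be integers, there is no modularity to lean on, so these uniform bounds, and the justified continuation and convergence of the singular series, are where the real work lies.
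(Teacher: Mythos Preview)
Your approach is sound and genuinely different from the paper's. You set up the problem via the Lambert-series generating function and a Farey dissection of the circle, with the local analysis governed by the Estermann function $E_a(s;p/q)$; the paper instead interprets the factor $(n-k)^b$ as a Riesz weight, writes
\[
S_{a,b}(n)=\Gamma(b+1)\sum_{d\ge1}d^{a-b}\sum_{\substack{1\le e_1,e_2\le d\\ e_1e_2\equiv n\pmod d}}\frac{1}{2\pi i}\int_{(a+2)}\zeta(s-a,e_1/d)\,\zeta(s,e_2/d)\,\frac{\Gamma(s)}{\Gamma(s+b+1)}\,n^{b+s}d^{-2s}\,ds,
\]
and shifts the $s$-contour. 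The two packagings are closely related---your Estermann function decomposes into exactly the Hurwitz products the paper uses---so the residue computations match: their poles at $s=a+1$ and $s=1$ are your $A_a\!\cdot\!A_b$ and $B_a\!\cdot\!A_b$ terms, and their $\mathrm{Res}(0)\ll n^b$ is your ``lower poles'' contribution. What differs is the organization of the error: the paper never separates major and minor arcs but instead bounds the shifted integral on a single vertical line, applying the functional equation to both Hurwitz factors and reducing to the exponential sums $S_n(m,k;d)=\sum_{e_1e_2\equiv n\,(d)}e((me_1+ke_2)/d)$, which they relate to Kloosterman sums and bound by Weil. Your minor-arc estimate will need the same input, since the functional equation of the Estermann function already contains these Kloosterman sums; so the ``uniform in $q$'' bounds you anticipate are precisely Weil-quality bounds, and the final exponent $\tfrac{a+b}{2}+1+\epsilon$ should come out the same way. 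The paper's contour-shift framework is somewhat cleaner in that there is no arc-gluing to manage and the secondary residues $\mathrm{Res}(-m)$ emerge automatically; your circle-method framework is more classical and makes the singular-series structure (the Ramanujan-sum identity yielding $\sigma_{a+b+1}(n)/\zeta(a+b+2)$ and $\sigma_{b-a+1}(n)/\zeta(b-a+2)$) more transparent from the outset.
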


Notice that when $a$ is an odd integer $\geq 3$, the secondary term in Theorem \ref{thm:st1}, which is $O(n^{b+1})$, actually vanishes, so Theorem \ref{thm:st1} is consistent with \eqref{eqn:ramanujan} (which requires both parameters to be odd integers) but does not quite recover it.  In fact, our proof shows that there are typically \emph{many} lower order terms in the asymptotic formula for $S_{a,b}(n)$, of orders $O(n^{b+1-m})$ for non-negative integers $0 \leq m < \frac{b-a}{2}+\frac{7}{4}$.  All of these terms but that of order $O(n^b)$ vanish if the smaller parameter $a$ is an odd integer, and it is in fact this term that recovers Ramanujan's secondary term.


\begin{theorem}\label{thm:st2}
	Let $a$ and $b$ be positive real numbers.  If $b-a > 3/2$, then
	\begin{align*}
		S_{a,b}(n) 
			&= \frac{\Gamma(a+1)\Gamma(b+1)}{\Gamma(a+b+2)}\frac{\zeta(a+1)\zeta(b+1)}{\zeta(a+b+2)}\sigma_{a+b+1}(n) \\ 
			\notag &\quad  + \frac{\zeta(1-a)\zeta(b+1)}{(b+1)\zeta(b-a+2)}n^a\sigma_{b-a+1}(n) 
			 + \sum_{0 \leq m < \frac{b-a}{2}-\frac{3}{4}} \mathrm{Res}(-m) + O_{a,b,\epsilon}(n^{\frac{a+b}{2}+\frac{3}{4}+\epsilon}),
	\end{align*}
	where $\mathrm{Res}(-m)$ is given explicitly by \eqref{eqn:s=-m}.  It satisfies $\mathrm{Res}(-m) \ll n^{b-m}$ in general, and if $a$ is an odd integer, then $\mathrm{Res}(0) = -\frac{1}{2}\zeta(-a)\sigma_b(n)$ and $\mathrm{Res}(-m)=0$ for each $m \geq 1$.
\end{theorem}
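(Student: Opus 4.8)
The plan is to convert $S_{a,b}(n)$ into a single contour integral whose poles produce all of the listed terms, using the functional equation of the Estermann (twisted divisor) zeta function as the analytic engine. First I would open the outer divisor sum and organize by the additive shift, writing
\[
S_{a,b}(n) = \sum_{q\ge 1} q^b \sum_{\substack{1\le k<n\\ k\equiv n\pmod{q}}}\sigma_a(k),
\]
and then detect the congruence $k\equiv n\pmod q$ with additive characters, so that the inner sum becomes a linear combination over $h\bmod q$ of the twisted divisor Dirichlet series $D_a(s,h/q)=\sum_{k\ge1}\sigma_a(k)e^{2\pi i hk/q}k^{-s}$, each evaluated through Perron's formula against $n^s/s$. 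The diagonal term $h=0$ contributes $\zeta(s)\zeta(s-a)$, whose poles at $s=a+1$ and $s=1$ are the sources of $\zeta(a+1)$ and $\zeta(1-a)$ in the main and secondary terms.

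Next I would apply the Estermann functional equation to each $D_a(s,h/q)$ to obtain a convergent ``dual'' representation and, crucially, to expose the arithmetic of $n$. Summing the resulting polar contributions over $q$ against the weight $q^b$ produces Ramanujan sums $c_q(n)$, and the identity $\sum_{q\ge1}c_q(n)q^{-s}=\sigma_{1-s}(n)/\zeta(s)$ converts these into the divisor sums $\sigma_{a+b+1}(n)$ and $\sigma_{b-a+1}(n)$ together with the reciprocal zeta factors $1/\zeta(a+b+2)$ and $1/\zeta(b-a+2)$; the Beta factor $\Gamma(a+1)\Gamma(b+1)/\Gamma(a+b+2)$ emerges from the Gamma factors of the functional equation. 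After this bookkeeping, $S_{a,b}(n)$ is expressed as $\frac{1}{2\pi i}\int_{(\sigma_0)}\mathcal{G}(s,n)\,ds$ for $\sigma_0$ large, where $\mathcal{G}(s,n)$ is built from $\zeta$- and $\Gamma$-factors together with a divisor sum over $d\mid n$, is of size $n^{s+b}$ times slowly varying factors, and is meromorphic with poles at $s=a+1$, at $s=1$, and at the non-positive integers $s=0,-1,-2,\dots$ (the last coming from the poles of the $\Gamma$-factor).

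I would then shift the contour to the line $\Re(s)=\frac{a-b}{2}+\frac34$. The hypothesis $b-a>3/2$ is precisely what places this line to the left of $s=0$, so the shift sweeps past the poles at $s=a+1$ (yielding the main term), $s=1$ (yielding the secondary term), and $s=-m$ for every integer $0\le m<\frac{b-a}{2}-\frac34$ (yielding $\mathrm{Res}(-m)$ as in \eqref{eqn:s=-m}). On the new line the integrand is of size $n^{\frac{a+b}{2}+\frac34}$ times factors that I would bound using Stirling's formula for the $\Gamma$-factors and the convexity estimate $\zeta(\tfrac12+it)\ll_\epsilon (1+|t|)^{1/4+\epsilon}$ for the zeta factors; this yields the error term $O_{a,b,\epsilon}(n^{\frac{a+b}{2}+\frac34+\epsilon})$, and $3/4$ is exactly the abscissa at which convexity still guarantees absolute convergence of the $t$-integral.

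The main obstacle is the uniformity of this analysis in the modulus $q$: justifying the interchange of the $q$-summation with the contour integral and controlling the dual sums uniformly is delicate, and it is here that the convexity bound enters and fixes the exponent $3/4$; a secondary difficulty is the exact identification of the constants, which requires evaluating the Ramanujan-sum Dirichlet series carefully to produce the precise factors $1/\zeta(a+b+2)$ and $1/\zeta(b-a+2)$. Finally, the stated properties of $\mathrm{Res}(-m)$ follow from \eqref{eqn:s=-m}: the residue is a product of the diagonal value $\zeta(-m)\zeta(-m-a)$ with a Ramanujan-sum Dirichlet series in $q$ that evaluates to a divisor sum of $n$ of size $n^{b-m}$, whence $\mathrm{Res}(-m)\ll n^{b-m}$. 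When $a$ is an odd integer the factor $\zeta(-m)\zeta(-m-a)$ vanishes for every $m\ge1$ (for even $m\ge2$ since $\zeta(-m)=0$, and for odd $m$ since $-m-a$ is a negative even integer), while at $m=0$ it equals $\zeta(0)\zeta(-a)=-\tfrac12\zeta(-a)$ and the $q$-sum supplies $\sigma_b(n)$, giving $\mathrm{Res}(0)=-\tfrac12\zeta(-a)\sigma_b(n)$ in agreement with \eqref{eqn:ramanujan}.
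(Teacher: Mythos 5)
Your overall architecture --- a contour integral whose poles at $s=a+1$, $s=1$, and $s=-m$ produce the three families of terms, shifted to $\Re(s)=\frac{a-b}{2}+\frac34$ --- matches the paper's, which works with $\sum_{e_1e_2\equiv n\pmod{d}}\zeta(s-a,e_1/d)\zeta(s,e_2/d)$, the Hurwitz-zeta avatar of your Estermann decomposition. But there are three concrete gaps. First, your opening identity $S_{a,b}(n)=\sum_q q^b\sum_{k\equiv n\pmod{q}}\sigma_a(k)$ discards the weight $(n-k)^b$ and commits you to a sharp Perron integral with kernel $n^s/s$. The paper instead writes $\sigma_b(n-k)=\sum_{d\mid n-k}((n-k)/d)^b$, keeping $(n-k)^b$ as a Riesz-mean smoothing; this produces the kernel $\Gamma(s)/\Gamma(s+b+1)\,n^{s+b}$, which decays like $(1+|t|)^{-b-1}$ and makes the shifted integral absolutely convergent. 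With only $1/s$ decay your $t$-integral does not converge absolutely once the Estermann/zeta factors grow polynomially, and the implicit truncation $q\le n-k$ interacting with the growing weight $q^b$ blocks the clean evaluation of your Ramanujan-sum Dirichlet series. Relatedly, the factor $\Gamma(a+1)\Gamma(b+1)/\Gamma(a+b+2)$ does not come from the Gamma factors of the Estermann functional equation: no functional equation is needed at the pole $s=a+1$, where the series converges. It is the Riesz kernel evaluated at that pole (equivalently, the Riemann sum for $\int_0^1 t^a(1-t)^b\,dt$), and your setup has no mechanism to produce it.

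Second, the exponent $\frac{a+b}{2}+\frac34$ is not a convexity phenomenon. After the functional equation, the sum over residues $e_1e_2\equiv n\pmod{d}$ (your sum over $h\bmod q$) becomes a complete exponential sum $S_n(m,k;d)$, which the paper's Lemma \ref{lem:kloosterman} reduces to classical Kloosterman sums and bounds by $d^{1/2+\epsilon}(d,k)^{1/2}(d,m)^{1/2}$ via Weil. That square-root saving in the $d$-aspect is exactly what lets the modulus sum converge down to $\delta=\frac{b-a}{2}+\frac14$ and yields the $\frac34$; the convexity bound for $\zeta(\frac12+it)$ cannot substitute for it. Third, your description of $\mathrm{Res}(-m)$ as $\zeta(-m)\zeta(-m-a)$ times a Ramanujan-sum series accounts only for the diagonal term $h=0$. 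The actual formula \eqref{eqn:s=-m} involves $\zeta(-m-a,e_1/d)\zeta(-m,e_2/d)$, i.e.\ Bernoulli polynomials $B_{m+a+1}(e_1/d)B_{m+1}(e_2/d)$ summed over $e_1e_2\equiv n\pmod{d}$; your version would force $\mathrm{Res}(-m)=0$ for every even $m\ge 2$ regardless of $a$, which is not what happens in general. The vanishing for odd $a$ is instead proved via the involution $(e_1,e_2)\mapsto(d-e_1,d-e_2)$ together with the antisymmetry $B_{k+1}(1-x)=(-1)^{k+1}B_{k+1}(x)$, after which the surviving boundary terms are killed by the trivial zeros of $\zeta$.
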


In particular, when $a\geq 3$ is an odd integer and $b > a+3/2$, Theorem \ref{thm:st2} implies
	\[
		S_{a,b}(n) 
			= \frac{\Gamma(a+1)\Gamma(b+1)}{\Gamma(a+b+2)}\frac{\zeta(a+1)\zeta(b+1)}{\zeta(a+b+2)}\sigma_{a+b+1}(n)
			-\frac{1}{2}\zeta(-a)\sigma_b(n)
			+O_{a,b,\epsilon}(n^{\frac{a+b}{2}+\frac{3}{4}+\epsilon}),
	\]
recovering Ramanujan's formula \eqref{eqn:ramanujan} but without requiring $b$ to be an odd integer.  Thus, Theorem \ref{thm:st2} recovers and expands on the asymptotic formula for $S_{a,b}(n)$ available from the theory of modular forms.  We note that when $b$ is also an odd integer, it was conjectured by Ramanujan and proved by Deligne that the error term is of the form $O_{a,b,\epsilon}(n^{\frac{a+b}{2}+\frac{1}{2}+\epsilon})$.  This improved error term is available only when $b$ is an odd integer, however; we discuss possible improvements to the error term when $b$ is not an odd integer in the final section of this paper.

\medskip 

The core of the paper is Section \ref{sec:st}, where we state and prove a theorem subsuming Theorems \ref{thm:st1} and \ref{thm:st2}. 
We first present in Section \ref{sec:elem} a simple elementary proof of Ramanujan's conjecture (with power saving error term) along similar lines as Halberstam \cite{halberstam}.

Also in this paper, in Section \ref{sec:ggt} we describe a problem in geometric topology which initially motivated our interest in this problem. In brief, the additive convolution $S_{1,2}(n)$ appears while counting primitive ramified degree $n$ covers of the square torus (or in other words, square-tiled surfaces with $n$ squares) with two ramification points. These surfaces can be classified according to their horizontal cylinder configurations.  There are exactly four such configurations, and knowing the asymptotic for $S_{1,2}(n)$, which already is difficult to find in the literature, enables us to compute asymptotic proportions of two of these four horizontal cylinder configurations.

\section*{Acknowledgements}

The authors would like to thank Bruce Berndt, Michael Filaseta, Peter Humphries, Karl Mahlburg, Ken Ono, Ian Petrow, Igor Shparlinski, and Matt Young for useful discussions
and for pointing us to relevant related works.

RJLO was partially supported by NSF grant DMS-1601398. FT was partially supported by grants from the Simons Foundation (Nos. 563234 and 586594).

\section{Motivation from Geometric Topology}\label{sec:ggt}

Our initial interest in studying additive convolutions of the kind $S_{a,b}$ arose from a counting problem in geometric topology. 
In order to describe succinctly where the additive convolution appears we begin with a brief exposition on translation surfaces and their moduli spaces.

\subsection{Translation surfaces and their moduli spaces}

A \emph{translation surface} is a closed orientable surface obtained from the union of finitely many Euclidean polygons $\{\Delta_1, \dots, \Delta_n\}$ such that:
\begin{itemize}
\item the embedding of the polygons in $\RR^2$ is fixed only up to translation;
\item the boundary of every polygon is oriented counterclockwise; and
\item for every $1 \leq j \leq n$ and for every oriented side $s_j$ of $\Delta_j$, there exist $1 \leq \ell \leq n$ and an oriented side $s_\ell$ of $\Delta_\ell$ so that $s_j$ and $s_\ell$ are parallel, of equal length and of opposite orientation. The sides $s_j$ and $s_\ell$ are glued together by a parallel translation. 
\end{itemize}

A few key things follow from the definition.
\begin{itemize}
\item The total angle around a vertex is $2 \pi (k+1)$ for some non-negative integer $k$. When $k > 0$, we call the point a \emph{cone point}. 
\item We distinguish between two polygons one obtained from the other by a nontrivial rotation. However, two polygons are ``cut, parallel transport, and paste'' equivalent. For instance, consider Figure \ref{fig:equivalence}. Hence, translation surfaces come with a well defined vertical direction.
\end{itemize}

\begin{figure}[h!!]
\begin{tabular}{ccc}
\begin{tikzpicture}[sq/.style=
  {shape=regular polygon, regular polygon sides=4, draw, minimum width=2.828cm}]
\draw (0,0) node [sq]{};
\draw (0,0.8) -- (0,1.2);
\draw (0,-0.8) -- (0,-1.2);
\draw (0.8, 0.05) -- (1.2, 0.05);
\draw (0.8, -0.05) -- (1.2, -0.05);

\draw (-0.8, 0.05) -- (-1.2, 0.05);
\draw (-0.8, -0.05) -- (-1.2, -0.05);

\draw (2,0) node {$\not \simeq$};

\draw (4,0) node [sq, rotate = 35] {};
\begin{scope}[shift = {(4, 0)}, rotate = 35]
\draw (0,0.8) -- (0,1.2);
\draw (0,-0.8) -- (0,-1.2);
\draw (0.8, 0.05) -- (1.2, 0.05);
\draw (0.8, -0.05) -- (1.2, -0.05);

\draw (-0.8, 0.05) -- (-1.2, 0.05);
\draw (-0.8, -0.05) -- (-1.2, -0.05);
\end{scope}

\end{tikzpicture} &\hspace{0.5cm} & 
\begin{tikzpicture}[sq/.style=
  {shape=regular polygon, regular polygon sides=4, draw, minimum width=2.828cm}]

\draw (0,0) node [sq]{};
\draw (0,0.8) -- (0,1.2);
\draw (0,-0.8) -- (0,-1.2);
\draw (0.8, 0.05) -- (1.2, 0.05);
\draw (0.8, -0.05) -- (1.2, -0.05);

\draw (-0.8, 0.05) -- (-1.2, 0.05);
\draw (-0.8, -0.05) -- (-1.2, -0.05);

\draw (2,0) node {$\simeq$};

\begin{scope}[shift = {(3,-1)}]
\draw (0,0) -- (2,0) -- (4,2) -- (2,2) -- cycle;

\draw[dashed] (2,0)-- (2,2);

\end{scope}

\begin{scope}[shift = {(4,0)}]
\draw (0,-0.8) -- (0,-1.2);
\draw (2,0.8) -- (2,1.2);

\draw (1+0.8, 0.05) -- (1+1.2, 0.05);
\draw (1+0.8, -0.05) -- (1+1.2, -0.05);

\draw (1+-0.8, 0.05) -- (1+-1.2, 0.05);
\draw (1+-0.8, -0.05) -- (1+-1.2, -0.05);

\end{scope}

\end{tikzpicture}
\end{tabular}
\caption{On the left, the two translation surfaces differ by a nontrivial rotation, so are not considered equivalent. On the right, the two translation surfaces are cut and paste equivalent. We omit the orientation on the edges mentioned in the definition while representing the surfaces using polygons.}
\label{fig:equivalence}
\end{figure}
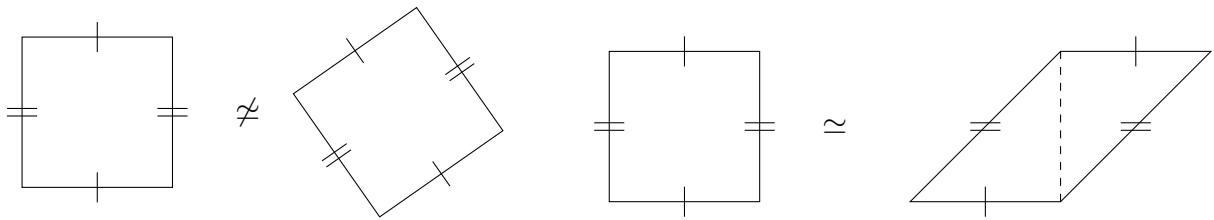

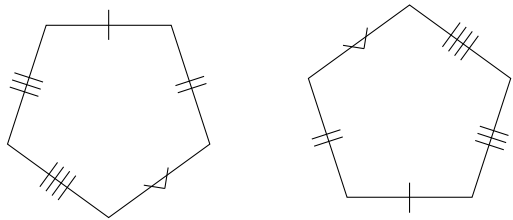
\begin{wrapfigure}{R}{0.45\textwidth}
\begin{tikzpicture}[pent/.style=
  {shape=regular polygon, regular polygon sides=5, draw, minimum width=2.828cm}]
  \draw (4,0) node[pent]{};
  \draw (0,0) node[pent,rotate=180/5]{};

  
  \begin{scope}[shift = {(90:1.15)}, rotate = 90]
  \draw (-0.2,0) -- (0.2,0);
  \end{scope}

  
  \begin{scope}[shift = {(90-360/5: 1.15)}, rotate = 90-360/5]
  \draw (-0.2,0.05) -- (0.2,0.05);
   \draw (-0.2,-0.05) -- (0.2,-0.05);
  \end{scope}

   triple id
  \begin{scope}[shift = {(90+360/5: 1.15)}, rotate = 90+360/5]
  \draw (-0.2,0.1) -- (0.2,0.1);
  \draw (-0.2,0) -- (0.2,0);
  
   \draw (-0.2,-0.1) -- (0.2,-0.1);
  \end{scope}

   quadruple id
  \begin{scope}[shift = {(90+2*360/5: 1.15)}, rotate = 90+2*360/5]
  \draw (-0.2,0.15) -- (0.2,0.15);
  \draw (-0.2,0.05) -- (0.2,0.05);
   \draw (-0.2,-0.05) -- (0.2,-0.05);
  
   \draw (-0.2,-0.15) -- (0.2,-0.15);
  \end{scope}

  \begin{scope}[shift = {(90+3*360/5: 1.07)}, rotate = 90+90+3*360/5]
  \draw (0,-0.2) -- (0.2,0);
   \draw (0,-0.2) -- (-0.2,0);
  \end{scope}
  

\begin{scope}[shift = {(4,0)}] 
 
  \begin{scope}[shift = {(-90:1.15)}, rotate = 90]
  \draw (-0.2,0) -- (0.2,0);
  \end{scope}
  
  \begin{scope}[shift = {(-90-360/5: 1.15)}, rotate = -90-360/5]
  \draw (-0.2,0.05) -- (0.2,0.05);
   \draw (-0.2,-0.05) -- (0.2,-0.05);
  \end{scope}
  
  \begin{scope}[shift = {(180+90+360/5: 1.15)}, rotate = 180+90+360/5]
  \draw (-0.2,0.1) -- (0.2,0.1);
  \draw (-0.2,0) -- (0.2,0);
  
   \draw (-0.2,-0.1) -- (0.2,-0.1);
  \end{scope}
  
  \begin{scope}[shift = {(180+90+2*360/5: 1.15)}, rotate = 180+90+2*360/5]
  \draw (-0.2,0.15) -- (0.2,0.15);
  \draw (-0.2,0.05) -- (0.2,0.05);
   \draw (-0.2,-0.05) -- (0.2,-0.05);
  
   \draw (-0.2,-0.15) -- (0.2,-0.15);
  \end{scope}
  
  \begin{scope}[shift = {(180+90+3*360/5: 1.23)}, rotate = 90+90+3*360/5]
  \draw (0,-0.2) -- (0.2,0);
   \draw (0,-0.2) -- (-0.2,0);
  \end{scope}

  \end{scope}

\end{tikzpicture}
\caption{A translation surface formed by two pentagons whose opposite corresponding sides are glued. This surface has genus 2 and lives in the stratum $\calH(1,1)$.}
\vspace{-0.8cm}
\label{fig:doublepentagon}
\end{wrapfigure}

Some basic examples of translation surfaces include an axis parallel square with opposite sides identified to give a square torus and a regular octagon with opposite sides identified. One can also take two regular $n$-gons with $n$ odd and identify opposite corresponding sides to form a translation surface. Consider Figure \ref{fig:doublepentagon} for an example with $n=5$. In general, the polygons need not be regular.

Translation surfaces also admit an alternate definition via complex analysis. Viewing the polygons as embedded in $\CC$, a translation surface has a complex structure with transition functions given by translations. The globally defined 1-form $dz$ on $\CC$ then induces a globally defined 1-form $\omega$ with zeroes exactly at the cone points. Hence, from the polygonal definition of a translation surface we obtain a pair $(X, \omega)$ where $X$ is a Riemann surface and $\omega$ is holomorphic 1-form. On the other hand, given such a pair $(X, \omega)$ one can also recover the polygonal definition using a geodesic triangulation of $X$ satisfying the appropriate properties outlined in the polygonal definition. Therefore, a translation surface can also be thought of as a pair $(X, \omega)$ of a Riemann surface $X$ equipped with a holomorphic 1-form $\omega$. See \cite{Masur2} for a more precise formulation of the equivalence of these two definitions of translation surfaces.

The genus of a translation surface is given by the classical Gauss-Bonnet theorem which relates the Euler characteristic of a surface with the total curvature. Since translation surfaces are built out of Euclidean polygons, they are flat everywhere except the cone points, and the Gauss-Bonnet theorem takes on a simpler form. Hence, a surface of genus $g$ with $m$ cone points of angles $2 \pi(\alpha_1+1), \dots, 2 \pi (\alpha_m +1)$ satisfies the relation $\displaystyle 2g - 2 = \sum_{i=1}^m \alpha_i.$

The angle data around 
the cone points can be recorded in a vector $\alpha = (\alpha_1, \dots, \alpha_m)$ where $m$ is the number of cone points and $2\pi(\alpha_i+1)$ are the cone angles defined as above. The collection of translation surfaces sharing the same angle data is called a \emph{stratum} and is denoted $\calH(\alpha)$. 

For any $\alpha$ that is an integer partition of an even number, $\calH(\alpha)$ can be given the structure of a complex orbifold. The main idea is that given $(X, \omega) \in \calH(\alpha_1, \dots, \alpha_m)$, we can fix a basis $\rho_1, \dots, \rho_{2g+m-1}$ for the first homology $H_1(X, \{P_1, \dots, P_m\};\ZZ)$ relative to the cone points. We can then get a map
\begin{equation}\label{eq:periodcoords}
 \calH(\alpha) \rightarrow \CC^{2g+m-1} 
 \text{ given by } (X, \omega) \rightarrow  \bigg(\int_{\rho_1} \omega, \dots, \int_{\rho_{2g+m-1}} \omega\bigg)
\end{equation}
These are called $\emph{period coordinates}$ for $\calH(\alpha)$. The period coordinates serve as local coordinates via which it can be shown, as in \cite{Masur, Veech1, Veech2}, that the strata are complex orbifolds of dimension $2g+m-1$
where $g$ is the genus of the translation surface with cone point data $(\alpha_1, \dots, \alpha_m)$. Kontsevich and Zorich \cite{KontZor} classified the connected components of $\calH(\alpha)$ for all $\alpha$. In particular, any $\calH(\alpha)$ can have at most 3 connected components. Moreover, any stratum admits an $\SL_2(\RR)$ action --- given a translation surface built out of polygons $\{\Delta_i\}$, its image under $A \in \SL_2(\RR)$ is simply the translation surface $\{A \cdot \Delta_i\}$ where $A$ acts on the polygons linearly.

\subsection{Volume in $\calH(\alpha)$}

The period coordinates can also be used to define a volume form on $\calH(\alpha)$. Consider the linear volume form on $\CC^{2g+m-1}$, normalized so that the fundamental domain of the integer lattice $(\ZZ+i\ZZ)^{2g+m-1}$ has volume 1. The pullback of this volume form under the period map gives what is popularly called the Masur-Veech volume form on $\calH(\alpha)$. Furthermore, this induces a volume form on $\calH_1(\alpha)$, the set of translation surfaces in $\calH(\alpha)$ of area 1 (i.e. collections of surfaces with total Euclidean area of the polgyons 1). The measure of $\calH_1(\alpha)$ with respect to this induced volume form has been shown to be finite for any $\alpha$, independently by Masur \cite{Masur} and Veech \cite{Veech1}.

Twenty years after, Eskin and Okounkov \cite{EskOk} computed the volume of these strata, $\calH_1(\alpha)$. They counted a particular type of translation surfaces called \emph{square-tiled surfaces} (STSs), which are exactly those translation surfaces in which the polygons are axis parallel Euclidean unit squares. Alternatively, they are exactly those translation surfaces $(X , \omega)$ such that their image under the period map (\ref{eq:periodcoords}) is in $(\ZZ + i \ZZ)^{2g+m-1}$. In this manner, STSs have a lattice-like structure in the space of translation surfaces and can be thought of as ``integer points'' of strata. Topologically, STSs are also thought of as branched covers of the standard square-torus with branching over exactly one point. 

The idea of the volume computation is motivated by the following simple case. To compute the surface area of a body in $\RR^n$, one can consider a large dilate of the body by $R > 1$, and count the integer points inside. Asymptotically, the number of such integer points would be $c\cdot R^n$ since $\RR^n$ is $n$-dimensional. The surface area of the body is then given by 
$$ \frac{d (c\cdot R^n)}{dR}\bigg|_{R = 1} = cn.$$

To compute the volume of $\calH_1(\alpha)$, one applies the same technique. Applying a homothety to the codimension 1 subset $\calH_1(\alpha)$ by $n$, we get the set of translation surfaces surfaces of area $n$. The integer points within this dilated region in $\calH(\alpha)$ are STSs with at most $n$ squares. The asymptotics of this count then yields the volume of $\calH_1(\alpha)$.

\subsection{Connections to Number Theory}

Using the volume computation heuristic described above, Zorich \cite{Zor} computed the volume of the first few strata by hands-on counting and obtained 
$$\vol(\calH_1(\emptyset)) = 2\cdot\zeta(2); \qquad \vol(\calH_1(2)) = \frac{3}{4}\cdot \zeta(4); \qquad \vol(\calH_1(1,1)) = \frac{1}{3}\cdot\zeta(4)$$
In general, Eskin and Okounkov \cite{EskOk} showed that the volume of $\calH_1(\alpha)$ is given by
$$ 
\vol(\calH_1(\alpha)) =  \frac{(|\alpha|+1) \lim_{D \rightarrow \infty} D^{-|\alpha|-1} \sum_{d =1}^D \calC_d(\alpha)}{\dim \calH(\alpha)},
$$
where $|\alpha| = \sum \alpha_i$, and the $\calC_d$ are the coefficients of a certain generating function $ \calC(\alpha) = \sum_{d =1}^\infty \calC_d(\alpha) q^d $ which they proved to be a quasimodular form, i.e, a polynomial in the Eisenstein series $G_k(q)$ for $k = 2, 4, 6$. Consequently, they showed that
$$ \frac{\vol(\calH_1(\alpha))}{\pi^{2g}} \in \QQ$$ for any stratum $\calH(\alpha)$ of genus $g$ translation surfaces. 

Since Eskin and Okounkov's volume computations, various counting problems have received much attention in the study of STSs, including the enumeration of \emph{primitive square-tiled surfaces}, i.e. those STSs whose covering of the square torus does not factor through another STS.  In some ways this problem is analogous to counting primitive vectors in $\ZZ^n$.

In 2006, Hubert and Lelievre \cite{HubLel} and McMullen \cite{McMullen} proved that primitive $n$-square STSs in $\calH(2)$ partition into at most two orbits under the linear action of $\SL_2(\ZZ)$ (induced by the linear action of $\SL_2(\RR)$). Subsequently, Lelievre and Royer \cite{LelRoy} obtained orbit-wise counting of primitive $n$-square STSs for odd $n$ in $\calH(2)$. In the computation, they obtained and used closed forms of sums of the type
$$ 
S_{1,1}^k(n) = \sum_{\substack{(a,b) \in \NN^2 \\ ka +b = n}} \sigma_1(a)\sigma_1(b).
$$
Note that $S_{1,1}^1 = S_{1,1}$ as defined above, the convolution of $\sigma_1$ with itself. For $k=2, 4$ and $n\geq 1$,  they obtained
\begin{gather*}
S_{1,1}^2(n) = \frac{1}{12}\sigma_3(n) + \frac{1}{3}\sigma_3\left(\frac{n}{2}\right) - \frac{1}{8}n \sigma_1(n) - \frac{1}{4}n\sigma_1\left(\frac{n}{2}\right) + \frac{1}{24}\sigma_1(n) + \frac{1}{24}\sigma_1\left(\frac{n}{2}\right),\\
S_{1,1}^4(n) = \frac{1}{48}\sigma_3(n) + \frac{1}{16}\sigma_3\left(\frac{n}{2}\right) + \frac{1}{3}\sigma_3\left(\frac{n}{4}\right) - \frac{1}{16}n \sigma_1(n) - \frac{1}{4}n\sigma_1\left(\frac{n}{4}\right) + \frac{1}{24}\sigma_1(n) + \frac{1}{24}\sigma_1\left(\frac{n}{4}\right).
\end{gather*}
They were able to express these sums as linear combinations of sums of powers of divisors using the fact that the spaces of quasimodular forms on congruence subgroups such as $M_4[\Gamma_0(4)]$ and $M_2[\Gamma_0(2)]$ are finite dimensional. Notably, however, since the generating functions for $\sigma_a$ for $a$ even are odd weight Eisenstein series, the analysis of the  convolution of $S_{a,b}$ for even $a$ resists the theory of quasimodular forms, and hence we use alternate methods to understand the asymptotics of such sums. 

We now describe the specific problem in the enumeration of STSs that motivated us to study $S_{a,b}$ for even $a$.  

Every STS can be viewed as a union of horizontal square-tiled cylinders glued together. One way to analyze an STS in a given stratum is to categorize its horizontal cylinder decomposition type, popularly termed \emph{cylinder diagram} that describes how many horizontal cylinders makes up the surface, and in what ways they are glued together. 

In particular, STSs in $\calH(1,1)$ (translation surfaces of genus two with two cone points) partition into exactly 4 cylinder diagrams. Figure \ref{fig:4types} shows prototypical examples of surfaces in the 4 cylinder diagrams named A, B, C and D in $\calH(1,1)$.
\begin{figure}[h!!]
\begin{tabular}{ccccccc}
\begin{tikzpicture}[scale=0.32, sq/.style=
  {shape=regular polygon, regular polygon sides=4, draw, minimum width=0.1mm}]

\draw (0,0) -- (11,0) -- (11,3) -- (0,3) -- (0,0);

\foreach \i in {0,5,11}{
\draw[fill] (\i,0) circle [radius = 0.12];
}
\foreach \i in {0,6,11}{
\draw[fill] (\i,3) circle [radius = 0.12];
}

\foreach \i in {2,7}{
\draw[fill] (\i,0) node [scale = 0.5, sq]{};
}
\foreach \i in {2,8}{
\draw (\i,3) node [scale = 0.5, sq]{};
}

\foreach \i in {1,2}{
\draw (0,\i) -- (11, \i);
}

\foreach \i in {1,2,3,4,5,6,7,8,9,10}{
\draw (\i,0) -- (\i, 3);
}

\draw (-0.5,1.5) node {$p$};
\draw (11.5,1.5) node {$p$};

\begin{scope}[shift = {(0,-0.2)}]

\draw  (1,-0.35) node {$j$};
\draw  (3.5,-0.35) node {$k$};
\draw  (6,-0.35) node {$\ell$};
\draw  (9,-0.35) node {$m$}; 
\end{scope}

\begin{scope}[shift={(0,0.2)}]

\draw (1,3.35) node {$j$};
\draw (4,3.35) node {$m$};

\draw  (7,3.35) node {$\ell$};

\draw (9.5,3.35) node {$k$};

\end{scope}

\end{tikzpicture}&\hspace{0.01cm} &

\begin{tikzpicture}[scale=0.32,sq/.style=
  {shape=regular polygon, regular polygon sides=4, draw, minimum width=0.1mm}]
\draw (0,0) -- (11,0) --(11, 2) -- (3,2) -- (3,5) -- (0,5) -- (0,0);

\foreach \i in {1}{
\draw (0,\i) -- (11, \i);
}

\foreach \i in {1,2,3,4,5,6,7,8,9,10}{
\draw (\i,0) -- (\i, 2);
}

\begin{scope}[shift ={(0,2)}]
\foreach \i in {0,1,2}{
\draw (0,\i) -- (3, \i);
}

\foreach \i in {1,2}{
\draw (\i,0) -- (\i, 3);
}

\end{scope}

\foreach \i in {0,3,11}{
\draw[fill] (\i,0) circle [radius = 0.12];
}
\foreach \i in {8}{
\draw[fill] (\i,2) circle [radius = 0.12];
}
\foreach \i in {0,3}{
\draw[fill] (\i,5) circle [radius = 0.12];
}

\foreach \i in {0,3, 11}{
\draw[fill] (\i,2) node [scale = 0.5, sq]{};
}
\foreach \i in {6}{
\draw (\i,0) node [scale = 0.5, sq]{};
}

\draw (-0.5,1) node {$p$};
\draw (11.5,1) node {$p$};

\draw (-0.5,3.5) node {$q$};
\draw (3.5,3.5) node {$q$};

\begin{scope}[shift={(0,-0.2)}]
\draw (1.5,-0.35) node {$m$};
\draw (4.5,-0.35) node {$\ell$};
\draw (8.5,-0.35) node {$k$};
\end{scope}

\begin{scope}[shift = {(0,0.2)}]
\draw (5.5, 2.35) node {$k$};
\draw (9.5, 2.35) node {$\ell$};

\draw (1.5, 5.35) node {$m$};;

\end{scope}

\end{tikzpicture}& &

\begin{tikzpicture}[scale=0.32, sq/.style=
  {shape=regular polygon, regular polygon sides=4, draw, minimum width=0.1mm}]
\draw (2,2) -- (5,2) --(5, 0) -- (10,0) -- (10,2) -- (8,2) -- (8,5) -- (2,5) -- (2,2);

\foreach \i in {1}{
\draw (5,\i) -- (10, \i);
}

\foreach \i in {6,7,8,9}{
\draw (\i,0) -- (\i, 2);
}

\begin{scope}[shift ={(2,2)}]
\foreach \i in {1,2}{
\draw (0,\i) -- (6, \i);
}

\foreach \i in {1,2,3,4,5}{
\draw (\i,0) -- (\i, 3);
}

\draw (3,0) -- (6,0);

\end{scope}

\foreach \i in {5,10}{
\draw[fill] (\i,0) circle [radius = 0.12];
}
\foreach \i in {5, 10}{
\draw[fill] (\i,2) circle [radius = 0.12];
}
\foreach \i in {5}{
\draw[fill] (\i,5) circle [radius = 0.12];
}

\foreach \i in {2,8}{
\draw (\i,2) node [scale = 0.5, sq]{};
}
\foreach \i in {2,8}{
\draw (\i,5) node [scale = 0.5, sq]{};
}

\draw (8,0) node [scale = 0.5, sq]{};

\draw (10.5,1) node {$p$};
\draw (4.5,1) node {$p$};

\draw (8.5,3.5) node {$q$};
\draw (1.5,3.5) node {$q$};

\begin{scope}[shift = {(0,-0.2)}]

\draw (3.5,2-0.35) node{$m$};
\draw (6.5, -0.35) node{$\ell$};
\draw (9, -0.35) node{$k$};
\end{scope}

\begin{scope}[shift = {(0, 0.2)}]

\draw (3.5,5.35) node{$m$};
\draw (6.5, 5.35)  node{$\ell$};
\draw (9, 2.35) node {$k$};

\end{scope}

\end{tikzpicture} & & \begin{tikzpicture}[scale=0.32, sq/.style=
  {shape=regular polygon, regular polygon sides=4, draw, minimum width=0.1mm}]
\draw (1,2) -- (4,2) -- (4,0) --(6, 0) -- (6,2) -- (6,4) -- (4,4) -- (4,7) -- (1,7) -- (1,2);

\foreach \i in {1,2}{
\draw (4,\i) -- (6, \i);
}
\foreach \i in {5,6}{
\draw (\i,0) -- (\i, 2);
}

\foreach \i in {1, 4,6}{
\draw[fill] (\i,2) circle [radius = 0.12];
}
\foreach \i in {1,4}{
\draw[fill] (\i,7) circle [radius = 0.12];
}

\foreach \i in {4,6}{
\draw[fill] (\i,0) node [scale = 0.5, sq]{};
}
\foreach \i in {1,4,6}{
\draw (\i,4) node [scale = 0.5, sq]{};
}

\begin{scope}[shift = {(1,2)}]
\foreach \i in {1}{
\draw (0,\i) -- (5, \i);
}

\foreach \i in {1,2,3,4}{
\draw (\i,0) -- (\i, 2);
}

\end{scope}

\begin{scope}[shift ={(1,4)}]
\foreach \i in {0,1,2,3}{
\draw (0,\i) -- (3, \i);
}

\foreach \i in {1,2}{
\draw (\i,0) -- (\i, 3);
}

\end{scope}

\draw (6.5,1) node {$p$};
\draw (3.5,1) node {$p$};

\draw (6.5,3) node {$q$};
\draw (0.5,3) node {$q$};

\draw (4.5,5.5) node {$r$};
\draw (0.5,5.5) node {$r$};

\begin{scope}[shift = {(0,-0.2)}]
\draw (5,-0.35) node {$k$};
\draw (2.5,2-0.35) node {$\ell$};
\end{scope}

\begin{scope}[shift = {(0,0.2)}]
\draw (5,4+0.35) node {$k$};
\draw (2.5,7+0.35) node {$\ell$};

\end{scope}

\end{tikzpicture}\\
A & \hspace{0.1cm} & B & \hspace{0.1cm} & C & \hspace{0.1cm}  & D

\end{tabular}
\caption{Examples of STSs in the four cylinder diagrams of $\calH(1,1)$, here named A, B, C, D. In each surface, collections of edges with the same label are glued via translation. For instance, in A, the 3 edges labeled $p$ are glued to the 3 edges labeled $p$ via translation to form a horizontal cylinder. Hence, diagram A is characterized by having exactly one (maximal) horizontal cylinder. Similarly, diagram D consists of STSs in $\calH(1,1)$ with exactly three horizontal cylinders while diagram B and C consist of those with two horizontal cylinders but different gluing pattern. Adding squares to vary the parameters $p,q, r, j, k, l, m$ gives surfaces with different number of squares in each of these cylinder diagrams. }

\label{fig:4types}
\end{figure}
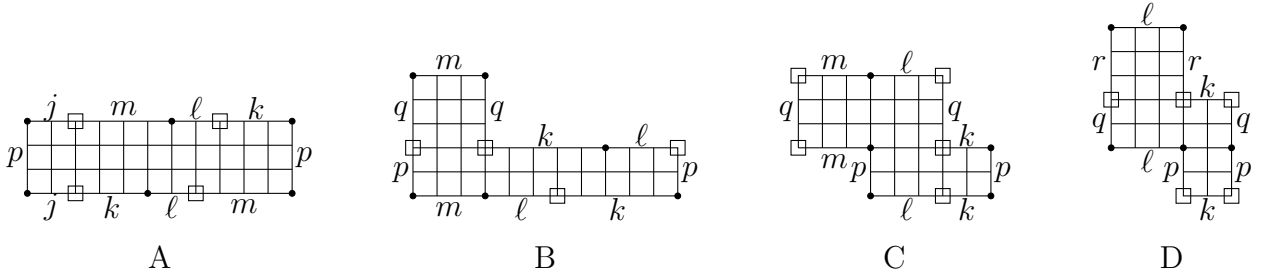

The counting problem in question is to enumerate, given a fixed $n$, the number of primitive STSs in $\calH(1,1)$ in each of the four cylinder diagrams and find the individual asymptotic densities of each them. For example, let the number of primitive $n$-square surfaces in $\calH(1,1)$ with diagram D be $D(n)$. The second author proved in \cite{Shres} that 
$$D(n) = \frac{1}{6}n(n-1)J_2(n) - \bigl((\mu \cdot \sigma_2) * (S_{1,2})\bigr)(n),$$
where $J_k(n) := n^k \prod_{p | n} \left(1 - \frac{1}{p^k}\right)$ is the Jordan totient function of order $k$, $\mu$ is the M\"{o}bius function and $*$ is Dirichlet convolution. Using Theorem \ref{thm:main}, 
the second author proved that surfaces with diagram D have asymptotic density $1- \frac{\zeta(2)\zeta(3)}{2\zeta(5)} \approx 0.047$. For similar formulae and asymptotic densities concerning the other diagrams A, B and C, see \cite[Theorem 1.1]{Shres}.

An analogous problem for the other genus two stratum $\calH(2)$ was solved by Zmiaikou \cite{Zmi}. Complete results for strata of genus 3 and above are not known although the density of one cylinder surfaces (although not necessarily primitive) has been computed by Delecroix-Goujard-Zograf-Zorich \cite{DelGouZogZor}.

\section{Proof of Theorem \ref{thm:main}}\label{sec:elem}

For the reader's convenience, we begin with a short proof of Ramanujan's conjecture, along similar lines to 
Halberstam \cite{halberstam}:

\begin{theorem}\label{thm:main}
For any positive real numbers $a$ and $b$, as $n\to\infty$ there holds
\begin{equation}\label{eq:main}
S_{a,b}(n) \sim \frac{\Gamma(a+1)\Gamma(b+1)}{\Gamma(a+b+2)} \frac{\zeta(a+1)\zeta(b+1)}{\zeta(a+b+2)} \sigma_{a+b+1}(n).
\end{equation}
\end{theorem}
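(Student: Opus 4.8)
The plan is to sidestep the fluctuation of $\sigma_a$ by first pulling out the size of its argument. Since $\sigma_a(k) = k^a \sigma_{-a}(k)$ and $\sigma_b(n-k) = (n-k)^b \sigma_{-b}(n-k)$, where $\sigma_{-a}(k) = \sum_{d\mid k} d^{-a}$ is a \emph{convergent} (hence essentially bounded) divisor sum, I would write
\[
S_{a,b}(n) = \sum_{k=1}^{n-1} k^a (n-k)^b \,\sigma_{-a}(k)\sigma_{-b}(n-k)
= \sum_{d,e\ge 1} \frac{1}{d^a e^b} \sum_{\substack{1\le k\le n-1\\ d\mid k,\ e\mid (n-k)}} k^a (n-k)^b .
\]
The inner sum now runs over $k$ in a single residue class modulo $L := \mathrm{lcm}(d,e)$ (nonempty precisely when $\gcd(d,e)\mid n$), and its summand $k^a(n-k)^b$ is a \emph{smooth} function of $k$ vanishing at the endpoints $k=0,n$. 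This is the crucial structural gain over opening up $\sigma_a$ directly.

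The next step is to approximate each inner sum by an integral. By Euler--Maclaurin summation over the progression,
\[
\sum_{\substack{1\le k\le n-1\\ k\equiv k_0\,(L)}} k^a (n-k)^b = \frac{1}{L}\int_0^n t^a(n-t)^b\,dt + (\text{error}) = \frac{B(a+1,b+1)}{L}\, n^{a+b+1} + (\text{error}),
\]
where $B(a+1,b+1)=\Gamma(a+1)\Gamma(b+1)/\Gamma(a+b+2)$ is the Beta integral after the substitution $t=ns$; this is exactly where the Gamma-factor of the theorem originates. Substituting this main term and recalling $L=de/\gcd(d,e)$, the leading contribution becomes
\[
n^{a+b+1}\,\frac{\Gamma(a+1)\Gamma(b+1)}{\Gamma(a+b+2)} \sum_{\substack{d,e\ge 1\\ \gcd(d,e)\mid n}} \frac{\gcd(d,e)}{d^{a+1}e^{b+1}},
\]
and the point is that this arithmetic double sum now \emph{converges} (the exponents exceed $1$), unlike the divergent sums one meets if $\sigma_a$ is opened directly.

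It then remains to evaluate the arithmetic sum as an Euler product. Writing $g=\gcd(d,e)$, $d=gD$, $e=gE$ with $\gcd(D,E)=1$, and using the identities $\sum_{\gcd(D,E)=1} D^{-(a+1)}E^{-(b+1)} = \zeta(a+1)\zeta(b+1)/\zeta(a+b+2)$ and $\sum_{g\mid n} g^{-(a+b+1)} = \sigma_{-(a+b+1)}(n) = \sigma_{a+b+1}(n)/n^{a+b+1}$, the sum factors as
\[
\sum_{\substack{d,e\ge 1\\ \gcd(d,e)\mid n}} \frac{\gcd(d,e)}{d^{a+1}e^{b+1}} = \frac{\zeta(a+1)\zeta(b+1)}{\zeta(a+b+2)}\cdot \frac{\sigma_{a+b+1}(n)}{n^{a+b+1}}.
\]
Multiplying through by $n^{a+b+1}$ and the Gamma-factor, the powers of $n$ cancel and leave exactly the claimed main term $\frac{\Gamma(a+1)\Gamma(b+1)}{\Gamma(a+b+2)}\frac{\zeta(a+1)\zeta(b+1)}{\zeta(a+b+2)}\sigma_{a+b+1}(n)$.

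The hard part will be controlling the Euler--Maclaurin error \emph{uniformly} in the modulus $L$ and then choosing where to truncate the $d,e$ sums. For small $d,e$ the integral approximation is excellent, but its error grows with $L$ (and with $a,b$, through the derivatives of $t^a(n-t)^b$); the saving grace is that such terms carry the small weight $d^{-a}e^{-b}$, so after truncating $d,e$ at a suitable power of $n$ and bounding the tail trivially, the accumulated error is $o(n^{a+b+1})$. Making this balancing rigorous---and in particular verifying that moduli comparable to $n$, where the progression has only $O(1)$ terms, contribute negligibly---is the only genuinely delicate point. It is precisely this ``balanced regime'' that causes the naive heuristic of replacing each $\sigma$ by its average to produce the wrong constant, and which the substitution $\sigma_a(k)=k^a\sigma_{-a}(k)$ is designed to tame.
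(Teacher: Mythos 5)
Your proposal is correct and follows essentially the same route as the paper's own proof: the substitution $\sigma_a(k)=k^a\sum_{d\mid k}d^{-a}$, the Riemann-sum (Euler--Maclaurin) evaluation of the inner progression sum against the Beta integral, the $\gcd$-factorization of the double sum via $\sum_{(i,j)=1}i^{-r}j^{-s}=\zeta(r)\zeta(s)/\zeta(r+s)$, and the truncation-and-balancing of the $d,e$ sums when the parameters are small are precisely the steps of Section \ref{sec:elem} (Lemmas \ref{lem:riemann_sum} and \ref{lem:dir_identity}). The only remaining work is the uniform error bookkeeping you already flag, which the paper carries out by splitting into the cases $a,b>1$, $a\le 1\le b$, and $a,b<1$.
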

As with \cite{halberstam} we will obtain a power saving error term. The theorem also holds if $a$ and $b$ are complex numbers with positive real part, in which case replace $a$ and $b$ by their real parts
everywhere in the error terms and inequalities.

We begin with two lemmas.

\begin{lemma}\label{lem:riemann_sum}
For any integer $n$ and residue class $k \pmod{m}$, we have
\begin{equation}\label{eq:riemann_result}
\sum_{\substack{j = 1 \\ j \equiv k \pmod m}}^{n - 1} j^a (n - j)^b = \frac{n^{a + b + 1}}{m} \frac{\Gamma(a+1)\Gamma(b+1)}{\Gamma(a+b+2)}
+ O_{a, b}\left(n^{a + b} \right).
\end{equation}
\end{lemma}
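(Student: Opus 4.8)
The plan is to recognize the left-hand side as a Riemann sum for the integral of $g(x) := x^a(n-x)^b$ over $[0,n]$, sampled along the arithmetic progression $j \equiv k \pmod m$ (so with sample spacing $m$), and to control the discretization error by the total variation of $g$ rather than by its derivative. The main term will come from the integral, and the $O_{a,b}(n^{a+b})$ error from the size of the variation.

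First I would extract the main term by evaluating the integral. Substituting $x = nt$ and invoking the Beta integral gives
\[
\int_0^n x^a(n-x)^b\,dx = n^{a+b+1}\int_0^1 t^a(1-t)^b\,dt = n^{a+b+1}\frac{\Gamma(a+1)\Gamma(b+1)}{\Gamma(a+b+2)}.
\]
Dividing by $m$ produces exactly the claimed main term, so it remains only to show that the sum differs from $\tfrac1m\int_0^n g(x)\,dx$ by $O_{a,b}(n^{a+b})$.

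Next I would invoke the standard error bound for Riemann sums of functions of bounded variation: if $[0,n]$ is partitioned into intervals $I_i$ of length $m$ with sample points $\xi_i \in I_i$, then $\bigl|\,m\sum_i g(\xi_i) - \int_0^n g\,\bigr| \le m\,V_0^n(g)$, where $V_0^n(g)$ is the total variation. This follows interval by interval from $m\,g(\xi_i) - \int_{I_i}g = \int_{I_i}\bigl(g(\xi_i)-g(x)\bigr)\,dx$ and the bound $|g(\xi_i)-g(x)|\le V_{I_i}(g)$. Taking the $\xi_i$ to be the progression points and dividing by $m$ gives $\bigl|\sum_j g(j) - \tfrac1m\int_0^n g\bigr| \le V_0^n(g)$, up to the contribution of at most the two boundary intervals near $0$ and $n$; since $g$ vanishes at both endpoints and is bounded there by $O_{a,b}(n^{a+b})$, these are harmless. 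To finish I would estimate the variation itself: because $g(x)=x^a(n-x)^b$ is unimodal on $[0,n]$, increasing up to its unique critical point $x_0 = an/(a+b)$ and decreasing thereafter with $g(0)=g(n)=0$, its total variation is simply $2\,g(x_0) = 2\,n^{a+b}\,a^a b^b/(a+b)^{a+b} = O_{a,b}(n^{a+b})$. Combining the three ingredients yields the lemma.

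I expect the only delicate point to be the behavior near the endpoints $x=0$ and $x=n$. When $a$ or $b$ is smaller than $1$ the derivative $g'$ blows up there, so a naive derivative-based estimate for the Riemann sum would fail. The virtue of routing the argument through total variation is precisely that $g$ is of bounded variation regardless of the sizes of $a$ and $b$, which both keeps the implied constant uniform and reduces the endpoint analysis to the trivial observation that $g$ vanishes at $0$ and $n$.
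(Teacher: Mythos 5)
Your proposal is correct and follows essentially the same route as the paper's (sketched) proof: both recognize the sum as a Riemann sum approximation, after rescaling by $n$, to the Beta integral $\int_0^1 t^a(1-t)^b\,dt$. Your use of the total variation of $x^a(n-x)^b$ to control the discretization error is a clean way to supply the detail the paper's sketch leaves implicit, and it correctly handles the endpoint behavior when $a$ or $b$ is less than $1$.
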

\begin{proof} (Sketch) As in \cite{halberstam}, we rewrite the sum in \eqref{eq:riemann_result} as
$n^{a + b} \sum_{j = 0}^{r-1} f\left(\alpha_0 + j\alpha\right)$,
where $f(t) := t^a (1 - t)^b$,
for some $\alpha_0$ and $r$ satisfying $0 \leq \alpha_0 < \alpha$ and $\left| r - \alpha^{-1} \right| < 1$.
After a change of variables, we recognize this as a Riemann sum approximation to the integral defining the beta function,
yielding the result.
\end{proof}

\begin{lemma}\label{lem:dir_identity}
We have, as a formal identity of Dirichlet series,
\[
\sum_{n = 1}^{\infty} \sum_{\substack{m = 1 \\ (m, n) = 1}}^{\infty}
n^{-r} m^{-s} =
\frac{\zeta(r) \zeta(s)}{\zeta(r + s)}.
\]
\end{lemma}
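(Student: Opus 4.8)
The plan is to detect the coprimality condition using the Möbius function and then factor the resulting triple sum. First I would invoke the elementary identity $\sum_{d \mid \gcd(m,n)} \mu(d)$, which equals $1$ when $(m,n)=1$ and $0$ otherwise. Substituting this for the coprimality restriction rewrites the left-hand side as
\[
\sum_{n=1}^\infty \sum_{m=1}^\infty n^{-r} m^{-s} \sum_{d \mid m,\, d \mid n} \mu(d).
\]
Because we are asserting only a formal identity of Dirichlet series, I may freely reorder the summations and bring the sum over $d$ to the outside, with no convergence or interchange-of-limits issues to address.

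The second step is the change of variables $n = d n'$ and $m = d m'$. Under this substitution the conditions $d \mid n$ and $d \mid m$ become unrestricted sums over $n', m' \geq 1$, and the $d$-dependence of $n^{-r} m^{-s}$ separates off as the factor $d^{-(r+s)}$. This yields
\[
\sum_{d=1}^\infty \mu(d)\, d^{-(r+s)} \left(\sum_{n'=1}^\infty (n')^{-r}\right)\left(\sum_{m'=1}^\infty (m')^{-s}\right) = \left(\sum_{d=1}^\infty \frac{\mu(d)}{d^{r+s}}\right)\zeta(r)\zeta(s).
\]
Recognizing the Dirichlet series $\sum_{d} \mu(d) d^{-(r+s)} = 1/\zeta(r+s)$ then finishes the argument. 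As an alternative I could argue via Euler products: the summand is multiplicative in the prime factorizations of $m$ and $n$, and the coprimality condition says that for each prime at most one of $m,n$ is divisible by it, so the local factor at $p$ is $\tfrac{1}{1-p^{-r}} + \tfrac{1}{1-p^{-s}} - 1$. A short computation collapses this to $\tfrac{1 - p^{-(r+s)}}{(1-p^{-r})(1-p^{-s})}$, and taking the product over all $p$ reproduces $\zeta(r)\zeta(s)/\zeta(r+s)$.

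Since the claim is stated purely as a formal identity, there is essentially no analytic obstacle, and indeed the result holds as a genuine convergent identity whenever $\Re(r), \Re(s) > 1$. The only point demanding mild care is the bookkeeping in the change of variables $(m,n) \mapsto (d, m', n')$: one should confirm that it is a faithful reparametrization (each ordered pair $(m,n)$ being counted once for every common divisor $d$), so that the factored product is exactly right. This is routine, so I expect the proof to be short.
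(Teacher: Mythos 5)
Your proof is correct and its main argument --- detecting the coprimality condition with $\sum_{d \mid \gcd(m,n)} \mu(d)$, reordering, and substituting $n = dn'$, $m = dm'$ to factor out $\sum_d \mu(d) d^{-(r+s)} = 1/\zeta(r+s)$ --- is exactly the proof given in the paper. The Euler-product alternative you sketch is also valid but is not needed.
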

\begin{proof}
This follows by rewriting the left side as
\begin{align*}
\sum_{d = 1}^{\infty} \mu(d) \sum_{u = 1}^{\infty} \sum_{v = 1}^{\infty}
(du)^{-r} (dv)^{-s}
= 
\sum_{d = 1}^{\infty} \mu(d) d^{-r- s} \sum_{u = 1}^{\infty} \sum_{v = 1}^{\infty}
u^{-r} v^{-s}.
\end{align*}
\end{proof}

\begin{proof}[Proof of Theorem \ref{thm:main}]
We rewrite $S_{a, b}(n)$ in the form
\begin{equation}\label{eq:concl2}
S_{a, b}(n) = \sum_{k=1}^{n-1} \sigma_a(k) \sigma_b(n-k) = 
\sum_{d=1}^{n - 1} d^{-a}
\sum_{e=1}^{n - 1} e^{-b}
\sum_{\substack{k = 1 \\ d \mid k \\ e \mid n - k}}^{n - 1} k^a (n - k)^b.
\end{equation}
If $(d, e) \nmid n$ then the inner sum vanishes. Otherwise, the divisibility
conditions are equivalent to demanding that 
$k \equiv k_0 \pmod{\frac{de}{(d, e)}}$ for some $k_0$, and by
Lemma \ref{lem:riemann_sum} the inner sum equals
\begin{align*}
&
n^{a + b + 1} \left( \frac{de}{(d, e)} \right)^{-1}  \frac{\Gamma(a+1)\Gamma(b+1)}{\Gamma(a+b+2)} + 
O\left(n^{a + b} \right),
\end{align*}
so that 

\begin{equation}\label{eq:concl3}
S_{a, b}(n) = \frac{\Gamma(a+1)\Gamma(b+1)}{\Gamma(a+b+2)}  n^{a + b + 1}
\sum_{\substack{d, e=1 \\ (d, e) \mid n}}^{n - 1} d^{-a} e^{-b}
\left( \frac{(d, e)}{de} + O(n^{-1}) \right).
\end{equation}

Assuming for now that $a, b > 1$, the error term of $O(n^{-1})$ above contributes an error bounded by
\begin{equation}\label{eq:de_bound}
\ll n^{a + b} \sum_{d, e = 1}^{n - 1} d^{-a} e^{-b} 
\ll n^{a + b}.
\end{equation}

The sum in the main term of \eqref{eq:concl3} is equal to
\begin{align*}
\sum_{w \mid n} &w^{-a - b - 1} 
\sum_{\substack{i, j = 1 \\ (i, j) = 1}}^{n/w-1}  i^{-a - 1} j^{- b - 1} \\
	& =  \sum_{w \mid n} w^{-a - b - 1} 
		\left(\sum_{\substack{i, j = 1 \\ (i, j) = 1}}^{\infty}  i^{-a - 1} j^{- b - 1} 
		+ O\left( \left(\frac{n}{w} \right)^{- \min(a, b)} \right) \right) \\
	& = \sum_{w \mid n} w^{-a - b - 1}  \sum_{\substack{i, j = 1 \\ (i, j) = 1}}^{\infty}  i^{-a - 1} j^{- b - 1} + O(n^{- \min(a, b)}). \\
	\end{align*}
By Lemma \ref{lem:dir_identity} the sum over $i$ and $j$ above is $\zeta(a+1)\zeta(b+1)/\zeta(a+b+2)$, while the sum over $w$ may be identified as $n^{-a-b-1}\sigma_{a+b+1}(n)$.  Assembling this in \eqref{eq:concl3}, we obtain Theorem 1.1 with an error of $O(n^{a + b})$ in the case that $a,b>1$.

\medskip
If $a \leq 1$ and $b \geq 1$, then in \eqref{eq:de_bound} the error term
is $\ll n^{a + b + 1 - a} (\log n)^2$, the logarithmic factors being relevant only if $a=1$ or $b=1$.

If instead  $a, b < 1$, take the sum in \eqref{eq:de_bound} only through
$d \leq D$ and $e \leq E$, making an error $\ll n^{a + b} D^{1 - a} E^{1 - b}$.
Rewriting \eqref{eq:concl2} in the form
\begin{equation}\label{eq:concl2a}
\sum_{k = 1}^{n - 1} O(n^{a + b})
\left( \sum_{d \mid k} d^{-a} \right)
\left( \sum_{e \mid n - k} e^{-b} \right),
\end{equation}
the contribution from $d > D$ is $O(n^{a + b + 1 + \epsilon} D^{-a})$, and the contribution
from $e > E$ is similarly $O(n^{a + b + 1 + \epsilon} E^{-b})$. We therefore make a total error
\[
\ll 
n^{a + b + 1 + \epsilon} \max( n^{-1} D^{1 - a} E^{1 - b}, D^{-a}, E^{-b} ).
\]
Equating the parameters by choosing $D = n^{\frac{b}{b+a-ab}}$ and $E = n^{\frac{a}{b+a-ab}}$,
we obtain an error term
\[
\ll
n^{a + b + 1 + \epsilon - \frac{ab}{b+a-ab}}.
\]
This yields Theorem \ref{thm:main} in the remaining cases.
\end{proof}

\section{Main theorem and proof}\label{sec:st}

Again, for notational simplicity we assume that $b$ and $a$ are both real; if not, replace $b$ and $a$ with $\Re(b)$ and $\Re(a)$
in all inequalities and error estimates. 
We also assume without loss of generality that $b \geq a$ (i.e., that $\Re(b) \geq \Re(a)$ if these quantities are complex).

To motivate our strategy, in place of $\sum_{k = 1}^{n - 1} \sigma_a(k) \sigma_b(n - k)$, consider the problem
of estimating the simpler sum $\sum_{k = 1}^{n - 1} \sigma_a(k) (n - k)^b$. The  factor
$(n - k)^b$ appears to complicate matters, but
via the theory of Riesz means and Mellin transforms it may be interpreted as a {\itshape smoothing} factor that {\itshape helps} in evaluating of the sum.

In particular, we have the following familiar formula.

\begin{lemma}
	We have, for any Dirichlet series $\sum_k a(k) k^{-s}$ and any complex number $b$ with $\Re(b) > 0$, the formula
	\begin{equation}\label{eq:riesz}
		\frac{1}{\Gamma(b + 1)} \sum_{k = 1}^n a(k) (n - k)^b = \frac{1}{2 \pi i} \int \left( \sum a(k) k^{-s} \right) \frac{\Gamma(s)}{\Gamma(s + b + 1)} n^{s + b} ds,
	\end{equation}
	where the contour is over any vertical line where the Dirichlet series converges uniformly and absolutely.
\end{lemma}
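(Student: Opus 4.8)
The plan is to reduce the identity to a single Mellin transform that can be evaluated term by term, and then to justify interchanging the resulting sum with the contour integral. The key analytic input is the Beta-integral representation of the kernel: for $\Re(s)>0$ and $\Re(b)>-1$ one has $\int_0^1 (1-t)^b t^{s-1}\,dt = B(s,b+1) = \Gamma(s)\Gamma(b+1)/\Gamma(s+b+1)$, so that $\Gamma(s)/\Gamma(s+b+1)$ is, up to the constant $1/\Gamma(b+1)$, the Mellin transform of the function $t \mapsto (1-t)^b$ supported on the interval $(0,1)$. Everything in the lemma is built out of this one fact.

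First I would establish the following per-term identity: for each positive integer $k$ and a vertical contour $\Re(s)=\sigma>0$,
\[
\frac{1}{2\pi i}\int \frac{\Gamma(s)}{\Gamma(s+b+1)}\, k^{-s}\, n^{s+b}\,ds = \begin{cases} \dfrac{1}{\Gamma(b+1)}(n-k)^b, & k\le n,\\[4pt] 0, & k>n. \end{cases}
\]
For $k \le n$ this is precisely Mellin inversion applied to the Beta integral above, taking $t = k/n \in (0,1]$ and then multiplying through by $n^b$. For $k>n$ one has $n/k<1$, and the integrand has no poles to the right of the contour, since the only poles of the kernel are those of $\Gamma(s)$, located at $s=0,-1,-2,\dots$; because Stirling's formula gives $\Gamma(s)/\Gamma(s+b+1) \ll |s|^{-\Re(b)-1}$ on vertical lines, one may push the contour to $\Re(s)\to +\infty$, where $(n/k)^s \to 0$, to conclude that the integral vanishes.

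With the per-term identity in hand, I would work backwards from the right-hand side of \eqref{eq:riesz}. Since the Dirichlet series converges absolutely and uniformly on the contour by hypothesis, and since the kernel satisfies $\Gamma(s)/\Gamma(s+b+1) \ll |s|^{-\Re(b)-1}$ with $\Re(b)>0$, the integral converges absolutely and Fubini's theorem permits interchanging the summation over $k$ with the integration. This produces $\sum_k a(k)$ times the per-term integral; the terms with $k>n$ contribute zero, collapsing the infinite sum to the finite sum $\frac{1}{\Gamma(b+1)}\sum_{k=1}^{n} a(k)(n-k)^b$, which is exactly the left-hand side.

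The decisive, and only genuinely delicate, step is the vanishing of the per-term integral for $k>n$: this is what reconciles the \emph{finite} sum on the left with the \emph{full} Dirichlet series on the right, as the kernel automatically truncates the series at $k=n$. Making the rightward contour shift rigorous requires the Stirling bound to control the horizontal segments, and this is precisely where the hypothesis $\Re(b)>0$ enters, since it guarantees both the decay needed to close the contour at infinity and the absolute convergence of the integral that justifies Fubini. The Beta-integral evaluation and the Mellin-inversion step for $k \le n$ are then routine once this decay is secured.
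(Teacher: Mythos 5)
Your proposal is correct and follows essentially the same route as the paper: interchange the sum with the contour integral and reduce everything to the Mellin pair $\frac{1}{2\pi i}\int \frac{\Gamma(s)}{\Gamma(s+b+1)} t^{s}\,ds$, which truncates the Dirichlet series at $k=n$. The only difference is that you prove this key formula yourself (via the Beta integral, Mellin inversion, and a rightward contour shift justified by Stirling) where the paper simply cites Gradshteyn--Ryzhik and sketches the same contour-shifting argument.
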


\begin{proof} 
	Switching the order of integration and summation, this reduces to the formula
	\[
	 \frac{1}{2 \pi i} \int \frac{\Gamma(s)}{\Gamma(s + b + 1)} t^{s} ds =
	 \begin{cases} 0 & \textnormal{ if } 0 < t < 1, \\ \Gamma(b + 1)^{-1} \cdot (1 - t^{-1})^b  & \textnormal{ if }   t > 1,
	 \end{cases}
	 \]
	 for which see \cite[17.43.22]{GR}. (It may be proved by shifting the contour infinitely far to the right or left as appropriate, and evaluating the sum
	 of residues in the latter case.)
\end{proof}

Our goal will be to first manipulate our sum into something resembling \eqref{eq:riesz},
where the Dirichlet series $\sum a(k) k^{-s}$ can be expressed in terms of zeta functions and therefore enjoys analytic continuation to $\mathbb{C}$. As is familiar
in various analytic number theory contexts, this will then allow us to shift the integral in \eqref{eq:riesz} to the left. 

Now, we have
\begin{align}
	S_{a, b}(n) = \nonumber  
	\sum_{k = 1}^{n - 1} \sigma_a(k) \sigma_b(n - k) & = 
	\sum_{k = 1}^{n - 1} \sigma_a(k) \left( \sum_{d \mid n - k} \left( \frac{n -k}{d} \right)^b \right) \\ \nonumber
	& = \sum_{d \geq 1} d^{-b} \sum_{\substack{k = 1 \\ d \mid n - k}}^{n - 1} \sigma_a(k) (n - k)^b \\
	& = \Gamma(b + 1)  \sum_{d \geq 1} d^{-b} \frac{1}{2 \pi i} \int_{(a + 2)} \left( \sum_{\substack{k \\ d \mid n - k}} \sigma_a(k) k^{-s} \right) 
	\frac{\Gamma(s)}{\Gamma(s + b + 1)} 
	n^{b + s} ds, \label{eq:sab_second}
\end{align}
\medskip
where the integral is taken over the vertical line with $\Re(s) = a + 2$.

For any real $x>0$, let $\zeta(s,x)$ be the Hurwitz zeta function, defined for $\Re(s)>1$ by the Dirichlet series
\[
	\zeta(s,x)
		:= \sum_{n=0}^\infty \frac{1}{(n+x)^s}.
\]
We note that \begin{align*}
	\sum_{k \equiv n \pmod{d}} \frac{\sigma_a(k)}{k^s}
		&= \sum_{k_1k_2 \equiv n \pmod{d}} \frac{k_1^a}{(k_1k_2)^s} \\
		&= \sum_{\substack{ 1 \leq e_1,e_2 \leq d \\ e_1 e_2 \equiv n \pmod{d}}} \left(\sum_{m_1 \geq 0} \frac{1}{(m_1d + e_1)^{s-a}} \right) \left( \sum_{m_2 \geq 0} \frac{1}{(m_2d+e_2)^s}\right) \\
		&= \frac{1}{d^{2s-a}} \sum_{\substack{ 1 \leq e_1,e_2 \leq d \\ e_1 e_2 \equiv n \pmod{d}}} \zeta(s-a, e_1/d) \zeta(s,e_2/d).
\end{align*}
Thus, we conclude that
\begin{equation} \label{eqn:sab-hurwitz}
	S_{a,b}(n)
		= \Gamma(b+1) \sum_{d \geq 1} d^{a-b} \sum_{\substack{1 \leq e_1,e_2 \leq d \\ e_1e_2 \equiv n \pmod{d}}} \frac{1}{2\pi i} \int_{(a+2)} \zeta(s-a,e_1/d) \zeta(s,e_2/d) \frac{\Gamma(s)}{\Gamma(s+b+1)}n^{b+s}d^{-2s}\,ds.
\end{equation}

The main goal of this section is to prove the following theorem, essentially a restatement of Theorems \ref{thm:st1} and \ref{thm:st2}.

\begin{theorem}\label{thm:st-hurwitz}
	Let $a$ and $b$ be positive real numbers.  

	1.  If $b-a > 3/2$, then
	\begin{align*}
		S_{a,b}(n) 
			&= \frac{\Gamma(a+1)\Gamma(b+1)}{\Gamma(a+b+2)}\frac{\zeta(a+1)\zeta(b+1)}{\zeta(a+b+2)}\sigma_{a+b+1}(n) \\ 
			\notag &\quad  + \frac{\zeta(1-a)\zeta(b+1)}{(b+1)\zeta(b-a+2)}n^a\sigma_{b-a+1}(n) 
			 + \sum_{0 \leq m < \frac{b-a}{2}-\frac{3}{4}} \mathrm{Res}(-m) + O_\epsilon(n^{\frac{a+b}{2}+\frac{3}{4}+\epsilon}),
	\end{align*}
	where $\mathrm{Res}(-m)$ denotes the residue of the integrand of \eqref{eqn:sab-hurwitz} at $s=-m$, and is given explicitly by \eqref{eqn:s=-m}.  It satisfies $\mathrm{Res}(-m) \ll n^{b-m}$ in general, and if $a$ is an odd integer, then $\mathrm{Res}(0) = -\frac{1}{2}\zeta(-a)\sigma_b(n)$ and $\mathrm{Res}(-m)=0$ for each $m \geq 1$.

	2.  If $\max\{a,2-a\} < b \leq a + \frac{3}{2}$, then
	\begin{align*}
		S_{a,b}(n) 
			&= \frac{\Gamma(a+1)\Gamma(b+1)}{\Gamma(a+b+2)}\frac{\zeta(a+1)\zeta(b+1)}{\zeta(a+b+2)}\sigma_{a+b+1}(n)  \\
			& \quad\quad + \frac{\zeta(1-a)\zeta(b+1)}{(b+1)\zeta(b-a+2)}n^a\sigma_{b-a+1}(n) + O(n^{\frac{a+b}{2}+1+\epsilon}).
	\end{align*}
\end{theorem}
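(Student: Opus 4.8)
The plan is to take the exact representation \eqref{eqn:sab-hurwitz} as the starting point and to shift the line of integration to the left, collecting residues and estimating what remains. Writing
\[
F(s) := \sum_{d \ge 1} d^{a-b-2s} \sum_{\substack{1 \le e_1,e_2 \le d \\ e_1 e_2 \equiv n \pmod d}} \zeta(s-a,e_1/d)\,\zeta(s,e_2/d),
\]
so that $S_{a,b}(n)=\Gamma(b+1)\,\frac{1}{2\pi i}\int_{(a+2)}F(s)\frac{\Gamma(s)}{\Gamma(s+b+1)}n^{b+s}\,ds$, a short unfolding of the Hurwitz zeta functions identifies $F(s)=\sum_{k\ne n}\sigma_a(k)\,\sigma_{-b}(|n-k|)\,k^{-s}$ as a shifted-convolution Dirichlet series, convergent for $\Re(s)>a+1$ and already continued by the double sum above to $\Re(s)>\frac{a-b}{2}+1$; its meromorphic continuation, poles, and growth are the real content of the theorem. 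The integrand has simple poles at $s=a+1$ (from $\zeta(s-a,e_1/d)$, residue $1$), at $s=1$ (from $\zeta(s,e_2/d)$, residue $1$), and at $s=0,-1,-2,\dots$ (from $\Gamma(s)$).

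First I would evaluate the two rightmost residues. Summing the residue at $s=a+1$ over $d,e_1,e_2$ should reproduce the main term exactly as in the proof of Theorem \ref{thm:main}: one uses the congruence $e_1e_2\equiv n$ to carry out the $e_1$-sum and then the identity $\sum_{e=1}^{d}\zeta(w,e/d)=d^{w}\zeta(w)$, after organizing by $\gcd(e_i,d)$, to collapse the remaining sums into $\frac{\zeta(a+1)\zeta(b+1)}{\zeta(a+b+2)}\sigma_{a+b+1}(n)$, picking up the gamma factor $\frac{\Gamma(a+1)\Gamma(b+1)}{\Gamma(a+b+2)}$. The residue at $s=1$ is computed identically and yields the secondary term $\frac{\zeta(1-a)\zeta(b+1)}{(b+1)\zeta(b-a+2)}n^a\sigma_{b-a+1}(n)$; note that $\zeta(1-a)=0$ when $a$ is an odd integer, which is precisely why this term disappears for such $a$.

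The crux is estimating the shifted integral, and this is where the functional equation of the Hurwitz zeta function enters. On a line $\Re(s)=\sigma_0$ the remainder is $n^{b+\sigma_0}$ times the sum over $d,e_1,e_2$ of the integrand, so the abscissa of convergence of the $d$-sum dictates how far left $\sigma_0$ may be pushed. In Case 2 ($b\le a+\tfrac32$) I would shift to $\sigma_0=\frac{a-b}{2}+1+\epsilon\in(0,1)$; there $\Re(s-a)<0$ by the hypothesis $b>2-a$, so Hurwitz's functional equation expresses $\zeta(s-a,e_1/d)$ as an absolutely convergent exponential sum and makes it $O(1)$ uniformly in $e_1,d$, while the critical-strip factor $\zeta(s,e_2/d)$ is bounded trivially by $\ll (d/e_2)^{\Re(s)}$. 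Summing over the $\ll d$ admissible pairs gives $\sum_{e_1,e_2}|\cdots|\ll d^{1+\epsilon}$, so $\sum_d d^{a-b-2\sigma_0+1+\epsilon}$ converges, only the poles at $s=a+1$ and $s=1$ are crossed, and the error is $O(n^{\frac{a+b}{2}+1+\epsilon})$. In Case 1 ($b-a>\tfrac32$) one must instead reach $\sigma_0=\frac{a-b}{2}+\tfrac34+\epsilon<0$, beyond the barrier of absolute convergence, and this is the main obstacle, since the trivial bound only gives convergence for $\sigma_0>\frac{a-b}{2}+1$. Now \emph{both} $\Re(s-a)$ and $\Re(s)$ are negative, so applying the functional equation to both factors turns the inner sum over $e_1,e_2$ with $e_1e_2\equiv n\pmod d$ into a Kloosterman-type sum; rather than estimating it trivially, one invokes Weil's bound to gain a factor $d^{1/2}$, replacing $d^{1+\epsilon}$ by $d^{1/2+\epsilon}$. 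This improves the abscissa by exactly $\tfrac14$, permits the shift to $\sigma_0=\frac{a-b}{2}+\tfrac34+\epsilon$, crosses the additional poles $s=0,-1,\dots,-m$ with $m<\frac{b-a}{2}-\tfrac34$, and yields the error $O(n^{\frac{a+b}{2}+\frac34+\epsilon})$.

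The functional-equation expansion also provides the continuation of $F(s)$ into $\Re(s)<0$ and shows it is holomorphic at each $s=-m$, so the integrand's pole there comes only from $\Gamma(s)$; this gives $\mathrm{Res}(-m)=\frac{(-1)^m}{m!}\frac{\Gamma(b+1)}{\Gamma(b+1-m)}n^{b-m}F(-m)$, which is \eqref{eqn:s=-m}, and in particular $\mathrm{Res}(-m)\ll n^{b-m}$. Finally, when $a$ is an odd integer I would evaluate these residues using that $\zeta(-m,x)$ and $\zeta(-m-a,x)$ are Bernoulli polynomials in $x$; the resulting finite Bernoulli-polynomial identities, together with the vanishing of $\zeta$ at negative even integers, force $\mathrm{Res}(-m)=0$ for every $m\ge1$ and collapse $\mathrm{Res}(0)$ to $-\tfrac12\zeta(-a)\sigma_b(n)$, recovering Ramanujan's secondary term. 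I expect the Kloosterman-sum estimate, and the careful justification of the continuation and the shift across the narrow strip where neither expansion of $F(s)$ converges absolutely (relevant when $\tfrac32<b-a<2$), to be the most delicate parts of the argument.
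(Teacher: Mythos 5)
Your overall strategy coincides with the paper's: both start from \eqref{eqn:sab-hurwitz}, compute the residues at $s=a+1$, $s=1$, and $s=-m$, and in the regime $b-a>3/2$ push the contour to $\Re(s)=\frac{a-b}{2}+\frac34+\epsilon$ by applying the functional equation to \emph{both} Hurwitz zeta factors and invoking the Weil bound for the resulting twisted Kloosterman sums $S_n(m,k;d)$ (the paper's Lemma \ref{lem:kloosterman}); your accounting of the quarter-saving in the abscissa, of which poles $s=-m$ are crossed, and the Bernoulli-polynomial evaluation of $\mathrm{Res}(-m)$ for odd $a$ all match the paper. The one genuinely different choice is in Case 2: the paper keeps $\zeta(s,e_2/d)$ intact via Miyagawa's approximate functional equation (Lemma \ref{lem:AFE}) and again passes through Kloosterman sums, whereas you bound it directly in the critical strip and only unfold $\zeta(s-a,e_1/d)$. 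Your route is simpler --- essentially the paper's Section \ref{sec:easy} pushed from $\Re(s)=1-\epsilon$ to the optimal line $\Re(s)=1-\frac{b-a}{2}+\epsilon$ --- and it does reach the same error term on the same range, but two of your estimates are stated too loosely to stand as written: the functional equation does not make $\zeta(s-a,e_1/d)$ ``$O(1)$,'' since it carries a factor $\Gamma(1-s+a)$ of size $(1+|t|)^{a-\sigma_0+1/2}$, and $\zeta(s,e_2/d)\ll (d/e_2)^{\Re(s)}$ is not a trivial bound for $0<\Re(s)<1$; one needs something like $\zeta(s,e_2/d)\ll (d/e_2)^{\sigma_0}+(1+|t|)^{(1-\sigma_0)/2+\epsilon}$ from convexity. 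Both $t$-powers must be carried through to verify that the shifted integral converges (it does: the exponent works out to $\frac{a-b}{4}-\frac32<-1$). Two further small points: the residue at $s=1$ is not ``computed identically'' to the one at $s=a+1$, because $\zeta(1-a,e_1/d)$ is no longer given by a convergent Dirichlet series --- the paper handles this by analytically continuing the identity \eqref{eqn:hurwitz-identity} in the parameter $a$, a step you should not omit; and your aside that $\zeta(1-a)=0$ for odd $a$ fails at $a=1$.
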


After recalling some analytic facts about the Hurwitz zeta function,
we begin by analyzing the poles and residues of the integrand.  This constitutes an analysis of the main terms provided in Theorem \ref{thm:st-hurwitz}.  We then bound the error terms in Theorem \ref{thm:st-hurwitz} by means of the functional equation for Hurwitz zeta functions.  This has the net effect of replacing the summation of Hurwitz zeta functions by a Dirichlet series whose coefficients are certain Kloosterman sums.  This also implicitly gives another evaluation of the residual terms $\mathrm{Res}(-m)$.

Finally, we note that we can obtain the secondary term in a simpler fashion, with no Kloosterman sums,
when $a > 1$ and $b > a + 2$. We explain this in Section \ref{sec:easy}.

\subsection{Properties of the Hurwitz zeta function}
The following lemma recalls some basic properties of the Hurwitz zeta function. For proofs, see \cite{Apostol}.
\begin{lemma}
	For any real $x > 0$, the Hurwitz zeta function $\zeta(s, x) := \sum_{n = 0}^{\infty} (n + x)^{-s}$ satisfies the following:
	\begin{itemize}
		\item {\upshape (Analytic continuation)} 
			$\zeta(s, x)$ has analytic continuation to all of $\mathbb{C}$, with a simple pole at $s = 1$ with residue $1$, and holomorphic elsewhere.
		\item {\upshape (Functional equation)} 
			$\zeta(s, x)$ satisfies a functional equation, which for $x = e/d$ rational can be written
			\begin{equation} \label{eqn:hurwitz-fe}
				\zeta(1-s,e/d)
					= \frac{\Gamma(s)}{(2\pi)^s} \left( e^{\pi i s/2} \sum_{k \geq 1} \frac{e^{-2\pi i ke/d}}{k^s} + e^{-\pi i s/2} \sum_{k \geq 1} \frac{e^{2\pi i ke/d}}{k^s}\right).
			\end{equation}
		\item {\upshape (Evaluation at negative integers)}
			For integer values $k \geq 0$, there is the special value
			\begin{equation}\label{eqn:zeta_neg_int}
				\zeta(-k, x) = \frac{-1}{k+1} B_{k+1}(x),
			\end{equation}
			where $B_{k+1}(x)$ denotes the degree $k+1$ Bernoulli polynomial. 
	\end{itemize}
\end{lemma}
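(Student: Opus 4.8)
The plan is to prove the three stated properties of the Hurwitz zeta function $\zeta(s,x) = \sum_{n=0}^\infty (n+x)^{-s}$, defined initially for $\Re(s) > 1$ and $x > 0$. Since the lemma only asks for these standard facts (and cites \cite{Apostol}), I would give the classical derivations rather than anything novel. The most efficient unifying tool is the integral representation obtained from the Gamma function: starting from $\Gamma(s)(n+x)^{-s} = \int_0^\infty t^{s-1} e^{-(n+x)t}\,dt$ and summing the geometric series over $n \geq 0$, one gets, for $\Re(s) > 1$,
\[
	\zeta(s,x) = \frac{1}{\Gamma(s)} \int_0^\infty \frac{t^{s-1} e^{-xt}}{1 - e^{-t}}\,dt.
\]
This single formula is the backbone for all three parts.

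For analytic continuation, I would deform the real-axis integral into a Hankel-type contour integral around the positive real axis,
\[
	\zeta(s,x) = -\frac{\Gamma(1-s)}{2\pi i} \int_C \frac{z^{s-1} e^{-xz}}{1 - e^{-z}}\,dz,
\]
where $C$ runs in from $+\infty$ above the axis, around the origin, and back out below. The contour integral converges for all $s \in \CC$ and defines an entire function of $s$; the factor $\Gamma(1-s)$ then supplies poles only at $s = 1, 2, 3, \dots$. A short argument (e.g., by comparison with $\zeta(s,1) = \zeta(s)$, or by directly estimating the small-circle contribution) shows that the would-be poles at $s = 2, 3, \dots$ are cancelled because the corresponding residues of the integrand vanish, leaving a single simple pole at $s = 1$; computing the residue of the small circle there gives residue $1$, independent of $x$. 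For the special values at negative integers, I would instead keep $s = -k$ with $k \geq 0$ an integer, so that $z^{s-1} = z^{-k-1}$ and the contour integral reduces by the residue theorem to the coefficient of $z^k$ in the Taylor expansion of $z e^{-xz}/(1-e^{-z})$; recognizing the generating function $\frac{z e^{(1-x)z}}{e^z - 1} = \sum_{k \geq 0} B_k(1-x)\frac{z^k}{k!}$ (together with the reflection $B_{k+1}(1-x) = (-1)^{k+1}B_{k+1}(x)$) yields exactly \eqref{eqn:zeta_neg_int}. I would take care to match the normalization of the Bernoulli polynomials used in the statement.

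For the functional equation, the cleanest route for rational $x = e/d$ is to expand $\zeta(s, e/d)$ in terms of the Hurwitz zeta function's relation to Dirichlet $L$-functions, or equivalently to apply Poisson summation / the Lerch functional equation. Concretely, writing $\zeta(s,e/d) = \sum_{n \equiv e \,(d),\, n \geq 1} n^{-s} \cdot d^{s}$ and using the finite Fourier expansion of the indicator of the residue class $e \pmod d$ in terms of additive characters $k \mapsto e^{2\pi i k e/d}$, the functional equation for $\zeta(s,e/d)$ follows from the functional equation of the individual twisted sums $\sum_{k} e^{\pm 2\pi i k e/d} k^{-s}$, which is Hurwitz's formula specialized to rational argument. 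Carefully tracking the Gamma factor $\Gamma(s)/(2\pi)^s$ and the two exponential phases $e^{\pm \pi i s/2}$ reproduces \eqref{eqn:hurwitz-fe}.

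The main obstacle is bookkeeping rather than conceptual depth: the whole lemma is classical, so the real work is getting every normalization exactly right---the orientation and branch cut of the Hankel contour, the sign conventions for the Bernoulli polynomials (so that $\zeta(-k,x) = -B_{k+1}(x)/(k+1)$ comes out with the correct sign), and the precise phases in the functional equation. Since the statement explicitly cites \cite{Apostol}, in the actual write-up I would likely just reference that source for the detailed verifications and record only the key integral representation and the contour-shift argument, rather than reproduce every computation.
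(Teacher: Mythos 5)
The paper offers no proof of this lemma at all---it simply defers to the cited textbook of Apostol---and your sketch is precisely the chain of arguments given there (Gamma-function integral representation, Hankel-contour continuation with only the pole at $s=1$ surviving, collapse of the contour to a residue at $s=-k$ producing the Bernoulli polynomial via the generating function and the reflection $B_{k+1}(1-x)=(-1)^{k+1}B_{k+1}(x)$, and Hurwitz's formula for the functional equation), so your approach is correct and essentially the same as the paper's intended one. The only slip is notational: with the contour looping the positive real axis, the integrand should carry $(-z)^{s-1}$ (Riemann's convention) rather than $z^{s-1}$ for your prefactor $-\Gamma(1-s)/(2\pi i)$ to come out exactly right---a convention issue of the very kind you already flag as the main bookkeeping hazard.
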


To estimate the values of $\zeta(s, x)$ inside the critical strip, we will use the {\itshape approximate functional equation}, as proved in the following form by Miyagawa \cite{miyagawa}.

\begin{lemma}\label{lem:AFE}
	Assume $s = \sigma + i t$ for some $0 < \sigma < 1 $.  Set $T = \sqrt{2 \pi (|t|+1)}$.  Then for any real $x > 0$,
	\begin{align*}
		\zeta&(s,x) = \\
			&\sum_{0 \leq k \leq T} \frac{1}{(k+x)^s} 
			+ \frac{\Gamma(1-s)}{(2\pi)^{1-s}} \left[ e^{\frac{\pi i (1-s)}{2}}\sum_{k \leq T} \frac{e(-kx)}{k^{1-s}} + e^{\frac{-\pi i (1-s)}{2}}\sum_{k \leq T} \frac{e(kx)}{k^{1-s}}\right]
			+ O(t^{-\frac{\sigma}{2}}) + O(t^{\frac{\sigma-1}{2}}).
	\end{align*}
\end{lemma}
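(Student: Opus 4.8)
The plan is to prove the approximate functional equation by the classical method of Hardy and Littlewood, adapted to the Hurwitz shift and organized around the exact functional equation \eqref{eqn:hurwitz-fe}. Since the periodic zeta functions $\sum_{k \ge 1} e(\pm kx)/k^{1-s}$ depend only on $x \bmod 1$, and since the application in \eqref{eqn:sab-hurwitz} uses only $x = e_i/d \in (0,1]$, I would first reduce to the range $0 < x \le 1$, which is also where \eqref{eqn:hurwitz-fe} is stated. The starting point is the Euler--Maclaurin expansion: writing $N = \lfloor T \rfloor$ and $B_1(\{u\}) = \{u\} - \tfrac12$, one has
\[
	\zeta(s,x) = \sum_{0 \le k \le N} \frac{1}{(k+x)^s} + \frac{(N+x)^{1-s}}{s-1} - \frac{(N+x)^{-s}}{2} - s\int_N^\infty \frac{B_1(\{u\})}{(u+x)^{s+1}}\,du,
\]
where the integral converges absolutely for $\Re(s) > -1$ and furnishes the analytic continuation into the critical strip; this is precisely what tames the fact that the naive tail $\sum_{k > N}(k+x)^{-s}$ diverges there.

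The engine is then to insert the Fourier series $B_1(\{u\}) = -\sum_{m \ge 1} \frac{\sin(2\pi m u)}{\pi m}$, substitute, and integrate term by term, reducing the remainder to a sum over $m \ge 1$ of oscillatory integrals $\int_N^\infty (u+x)^{-s-1} e^{\pm 2\pi i m u}\,du$. Each such integral has phase $\pm 2\pi m u - t \log(u+x)$, with a stationary point (for the appropriate sign) at $u + x = t/(2\pi m)$. The key computation is that completing a stationary integral to $(0,\infty)$ and rotating the contour onto the imaginary axis evaluates it in terms of $\Gamma(1-s)$; tracking the shift by $x$ (which produces the factors $e(\pm m x)$) and the Bernoulli normalization, one finds that the completed integrals assemble precisely into $\frac{\Gamma(1-s)}{(2\pi)^{1-s}}\big[e^{\pi i (1-s)/2}\sum e(-mx)/m^{1-s} + e^{-\pi i (1-s)/2}\sum e(mx)/m^{1-s}\big]$, which I would cross-check against \eqref{eqn:hurwitz-fe}. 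This is the source of the dual sums in the lemma.

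It then remains to control the errors. The frequencies whose stationary point lies beyond the cutoff contribute the genuine dual terms, while the completion defects and the non-stationary frequencies are bounded by the second-derivative (van der Corput) test; together with the boundary terms $\tfrac{(N+x)^{1-s}}{s-1}$ and $\tfrac12 (N+x)^{-s}$, and after invoking Stirling's formula for $\Gamma(1-s)$ — which both produces the prefactor of size $|t|^{1/2-\sigma}$ and isolates the exponential phases $e^{\pm \pi i(1-s)/2}$ — these collect into $O(t^{-\sigma/2}) + O(t^{(\sigma-1)/2})$. The second, weaker exponent reflects that the sums in the lemma are truncated at the generous height $T = \sqrt{2\pi(|t|+1)}$ rather than at the optimal self-dual point.

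The hardest part will be making the stationary-phase analysis uniform in $x > 0$: individual dual terms $e(\pm kx)/k^{1-s}$ are partial sums of series that diverge for $\sigma > 0$ and grow as $x$ approaches an integer, whereas $\zeta(s,x)$ and the finite main sum stay controlled, so no estimate may depend on the distance from $x$ to $\ZZ$. The delicate regime is the transition $u \approx N \approx T$, where a stationary point crosses the cutoff and where the over-extended portions of the main and dual sums (those between the self-dual point and $T$) must be shown to cancel against one another at the intermediate resonances $u + x = t/(2\pi m)$; carrying this out carefully, uniformly in $x$, is exactly what pins down both the admissible height $T = \sqrt{2\pi(|t|+1)}$ and the precise exponents $-\sigma/2$ and $(\sigma-1)/2$ in the error term.
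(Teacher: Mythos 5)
First, a structural point: the paper does not prove this lemma at all --- it is quoted from Miyagawa \cite{miyagawa} --- so there is no in-paper argument to compare against. Your engine (Euler--Maclaurin continuation of $\zeta(s,x)$ into the strip, Fourier expansion of $B_1(\{u\})$, stationary phase producing the $\Gamma(1-s)$-weighted dual sums) is the classical Hardy--Littlewood route, which is how results of this type are proved, so methodologically you are on the right track, and your Euler--Maclaurin formula is correct. Note, however, that your restriction to $0<x\le 1$ is not an optional ``reduction'': the lemma as printed, ``for any real $x>0$,'' is false for large $x$, since $\zeta(s,x)=\frac{x^{1-s}}{s-1}+O(x^{-\sigma})$ grows without bound as $x\to\infty$ for fixed $s$, while every term on the right-hand side is bounded in terms of $t$ alone. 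The cited result (and the paper's application, where $x=e_i/d$) lives in $0<x\le 1$; you should present this as a necessary correction of the statement rather than a convenience.

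The genuine gap is the step you yourself flag as ``the hardest part,'' and it is not merely delicate --- it cannot be done, because the cancellation you are counting on does not exist. Your own stationary-phase bookkeeping shows the problem: with the main sum cut at $N\approx T$, the frequencies $m$ whose stationary point $u+x=t/(2\pi m)$ lies in $[N,\infty)$ are exactly those with $m\lesssim t/(2\pi T)=T/(2\pi)^2$, so the method produces dual sums of length about $T/(2\pi)^2$, not $T$. The surplus dual terms $T/(2\pi)^2<k\le T$ demanded by the lemma carry the prefactor $\bigl|\Gamma(1-s)e^{\pi i(1-s)/2}\bigr|\asymp |t|^{1/2-\sigma}$, and even granting square-root cancellation their total is of size about $t^{1/4-\sigma/2}$, which exceeds both claimed errors whenever $\sigma<3/4$. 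Concretely, take $x=1$, $\sigma=\tfrac12$, and write $\chi(s)=\frac{\Gamma(1-s)}{(2\pi)^{1-s}}2\sin(\pi s/2)$, $P=\sqrt{t/2\pi}$: the lemma would assert $\zeta(\tfrac12+it)=\sum_{n\le T+1}n^{-s}+\chi(s)\sum_{k\le T}k^{s-1}+O(t^{-1/4})$, and comparing with the Hardy--Littlewood formula truncated at $P$ (using the unbalanced version to convert $\sum_{P<n\le T}n^{-s}$ into $\chi(s)\sum_{P/2\pi<k\le P}k^{s-1}+O(t^{-1/4})$), the defect equals $\chi(s)\sum_{P/(2\pi)<k\le T}k^{s-1}+O(t^{-1/4})$, whose mean square in $t$ is $\asymp 1$; it is $\gg 1$ for a positive proportion of $t$, not $O(t^{-1/4})$. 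The resolution is that the symmetric truncation compatible with the stated error terms is the self-dual point $\sqrt{(|t|+1)/(2\pi)}$: the $T$ printed in the lemma is off by a factor of $2\pi$ (harmless for the paper, whose error analysis uses only $T\asymp\sqrt{|t|}$, but fatal for a proof of the literal statement). Once $T$ is replaced by $\sqrt{(|t|+1)/(2\pi)}$, the main and dual cutoffs automatically satisfy the classical relation $2\pi\xi\eta\approx t$, no over-extension cancellation is needed, and your outline closes along entirely standard lines.
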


We also note the following consequence of Stirling's formula.
\begin{lemma}\label{lem:gamma_quotient} 
	For any $b$, we have
	\[
		\frac{\Gamma(s)}{\Gamma(1 + b + s)} \ll_b (1 + |t|)^{-b - 1}.
	\]
\end{lemma}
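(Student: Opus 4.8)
The plan is to deduce the bound directly from Stirling's asymptotic expansion for the logarithm of the Gamma function, applied to the quotient $\Gamma(s)/\Gamma(1+b+s) = \Gamma(s)/\Gamma(s+(b+1))$. The key input is Stirling's formula in the form
\[
	\log\Gamma(z) = \left(z-\tfrac12\right)\log z - z + \tfrac12\log(2\pi) + O\!\left(|z|^{-1}\right),
\]
valid uniformly in any sector $|\arg z| \le \pi - \delta$, which I would apply to both $z = s$ and $z = s+b+1$ and then subtract.

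Carrying out the subtraction, the constant terms $\tfrac12\log(2\pi)$ cancel, the two linear terms combine to $+(b+1)$, and expanding $\log(s+b+1) = \log s + (b+1)/s + O(|s|^{-2})$ shows that $(s+b+\tfrac12)\log(s+b+1) = (s+b+\tfrac12)\log s + (b+1) + O(|s|^{-1})$. The constants $\pm(b+1)$ then cancel as well, leaving
\[
	\log\frac{\Gamma(s)}{\Gamma(s+b+1)} = -(b+1)\log s + O_b\!\left(|s|^{-1}\right).
\]
Taking real parts and using $\Re\log s = \log|s|$ gives $\log\bigl|\Gamma(s)/\Gamma(s+b+1)\bigr| = -(b+1)\log|s| + O_b(1)$, where the term $\Im(b+1)\cdot\arg s$ arising when $b$ is complex is absorbed into the $O_b(1)$ since $\arg s$ is bounded in the relevant region (and one then reads the exponent as $-\Re(b)-1$, consistent with the conventions of this section). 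Hence $|\Gamma(s)/\Gamma(s+b+1)| \ll_b |s|^{-(b+1)}$.

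Finally I would convert $|s|^{-(b+1)}$ into $(1+|t|)^{-(b+1)}$: in the contour integrals of Section \ref{sec:st} the abscissa $\sigma = \Re(s)$ ranges over a fixed bounded set that stays away from the poles of the quotient, so $|s| = \sqrt{\sigma^2+t^2} \asymp 1 + |t|$ uniformly, and the lemma follows. The computation is entirely routine; the only points needing a little care are verifying that $s$ remains in a sector $|\arg z| \le \pi - \delta$ so that Stirling applies (automatic once $\sigma$ is bounded and $|t|$ is large, the bounded range of $|t|$ being handled by continuity away from poles) and that the error terms are uniform in $t$, which holds because the implied constants in Stirling depend only on $\delta$. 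There is thus no genuine obstacle beyond bookkeeping the cancellation of the constant and linear terms cleanly.
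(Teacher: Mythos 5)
Your argument is correct and matches the paper's intent exactly: the paper states this lemma without proof as ``a consequence of Stirling's formula,'' and your subtraction of the two Stirling expansions, yielding $\log\bigl(\Gamma(s)/\Gamma(s+b+1)\bigr) = -(b+1)\log s + O_b(|s|^{-1})$, is the standard way to fill in that omitted computation. Your care about staying in a sector away from the poles of the quotient and about converting $|s|$ to $1+|t|$ on the bounded vertical strips actually used in Section \ref{sec:st} is appropriate and consistent with how the lemma is applied.
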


\subsection{Analysis of poles and residues}

We now proceed with our analysis of the integral \eqref{eqn:sab-hurwitz}.
For each $e_1,e_2$, the integrand has right-most pole at $s=a+1$, coming from the factor of $\zeta(s-a,e_1/d)$, which has a simple pole with residue $1$.  The sum of the residues is
\begin{align*}
\frac{\Gamma(a+1)\Gamma(b+1)}{\Gamma(a+b+2)} \sum_{d \geq 1} &\frac{n^{a+b+1}}{d^{a+b+2}}  \sum_{\substack{1 \leq e_1,e_2 \leq d \\ e_1e_2 \equiv n \pmod{d}}} \zeta(a+1,e_2/d) \\
	&= \frac{\Gamma(a+1)\Gamma(b+1)}{\Gamma(a+b+2)} \sum_{d \geq 1} \frac{n^{a+b+1}}{d^{b+1}} \sum_{k \geq 1} \frac{\#\{e_1 \pmod{d} : ke_1 \equiv n \pmod{d}\}}{k^{a+1}}.
\end{align*}
We then note that 
\[
	\#\{e_1 \pmod{d} : ke_1 \equiv n \pmod{d}\}
	= \begin{cases}
		(k,d), & \text{if } (k,d) \mid (d,n) \\
		0, & \text{otherwise.}
	\end{cases}
\]
Thus, write $f := (k,d)$, and observe that we may assume $f \mid n$.  So doing, and replacing $d$ and $k$ by $fd$ and $fk$, respectively, our expression for the residue at $s=a+1$ becomes
\begin{align*}
	\frac{\Gamma(a+1)\Gamma(b+1)}{\Gamma(a+b+2)} \sum_{f \mid n} \frac{n^{a+b+1}}{f^{a+b+1}}
		\sum_{\substack{d,k \\ (d,k)=1}} \frac{1}{d^{b+1} k^{a+1}}
		&= \frac{\Gamma(a+1)\Gamma(b+1)}{\Gamma(a+b+2)} \frac{\zeta(a+1)\zeta(b+1)}{\zeta(a+b+2)} \sigma_{a+b+1}(n),
\end{align*}
by Lemma \ref{lem:dir_identity}.

Before turning to the residue of the pole at $s=1$, we note one consequence of the above argument.  In particular, for any fixed $n$ and $b$, in the identity proved above,
\begin{equation}\label{eqn:hurwitz-identity}
	\sum_{d \geq 1} \frac{n^{a+b+1}}{d^{a+b+2}} \sum_{\substack{1 \leq e_1,e_2 \leq d \\ e_1e_2 \equiv n \pmod{d}}} \zeta(a+1,e_2/d)
		= \frac{\zeta(a+1)\zeta(b+1)}{\zeta(a+b+2)} \sigma_{a+b+1}(n), 
\end{equation}
both sides define analytic functions of $a$ for $\Re(a)>-b$, $a\neq 0$.  Thus, this expression must hold for $-b<\Re(a)<0$, even though neither $\zeta(a+1,x)$ nor $\zeta(a+1)$ is defined via a convergent Dirichet series in this region.  This will be useful in evaluating the residue at $s=1$, which we now turn to.

Using \eqref{eqn:sab-hurwitz} again, the pole at $s=1$ is seen to be
\begin{align*}
	\frac{\Gamma(b+1)}{\Gamma(b+2)} \sum_{d\geq 1} \frac{n^{b+1}}{d^{b-a+2}} \sum_{\substack{1 \leq e_1,e_2 \leq d \\ e_1e_2 \equiv n \pmod{d}}} \zeta(1-a,e_1/d)
		&= \frac{n^a}{b+1} \sum_{d \geq 1} \frac{n^{b-a+1}}{d^{b-a+2}} \sum_{\substack{1 \leq e_1,e_2 \leq d \\ e_1e_2 \equiv n \pmod{d}}} \zeta(1-a,e_1/d).
\end{align*}
Since we have assumed $a < b$, it follows that $-a > -b$, so by the identity \eqref{eqn:hurwitz-identity}, this evaluates to
\[
	\frac{n^a}{b+1} \frac{\zeta(1-a)\zeta(b+1)}{\zeta(b-a+2)}\sigma_{b-a+1}(n).
\]

Finally, we evaluate the residue at $s=-m$, $m \geq 0$, arising from the gamma function.  We do so in general, but we only provide a clean simplification of the term when $a$ is an odd integer.  The residues for other values of $a$ do not seem to have a natural multiplicative structure, for example, so we consider the case that $a$ is odd to be the most interesting.

Using \eqref{eqn:sab-hurwitz}, the residue at $s=-m$ is
\begin{equation}\label{eqn:s=-m}
	(-1)^m n^{b-m} \left({b}\atop {m}\right) \sum_{d \geq 1} d^{a-b+2m} \sum_{\substack{1 \leq e_1,e_2 \leq d \\ e_1e_2 \equiv n \pmod{d}}} \zeta(-m-a,e_1/d) \zeta(-m,e_2/d).
\end{equation}

When $a$ is an integer,
by the special value formula \eqref{eqn:zeta_neg_int} the inner summation over $e_1,e_2$ in \eqref{eqn:s=-m} becomes
\[
	\frac{1}{(m+1)(m+a+1)} \sum_{\substack{ 1 \leq e_1,e_2 \leq d \\ e_1e_2 \equiv n \pmod{d}}} B_{m+1}(e_2/d) B_{m+a+1}(e_1/d).
\]
For fixed $d$, the substitution $(e_1,e_2) \mapsto (d-e_1,d-e_2)$ defines an involution on the set of pairs $(e_1,e_2)$ with $e_1,e_2 \neq d$.  Since $B_{k+1}(1-x) = (-1)^{k+1}B_{k+1}(x)$, if $a$ is odd, it follows for such $e_1,e_2$ that 
\[
	B_{m+1}\Big(\frac{d-e_2}{d}\Big)B_{m+a+1}\Big(\frac{d-e_1}{d}\Big)
		= - B_{m+1}\Big(\frac{e_2}{d}\Big) B_{m+a+1}\Big(\frac{e_1}{d}\Big).
\]
Consequently, when $a$ is odd, the sum over $e_1,e_2$ with $e_1,e_2 \neq d$ cancels, and it remains to consider only those pairs where one of $e_1$ and $e_2$ equals $d$.  Given that $e_1$ and $e_2$ are restricted to satisfy the congruence $e_1e_2 \equiv n \pmod{d}$, such pairs arise only when $d \mid n$.  In this case, the summation over $e_1$ and $e_2$ in \eqref{eqn:s=-m} collapses to
\begin{align*}
	\sum_{e_1 = 1}^d \zeta(-m-a,e_1/d)&\zeta(-m) + \sum_{e_2 =1}^d \zeta(-m,e_2/d) \zeta(-m-a) - \zeta(-m-a)\zeta(-m) \\
		&= d^{-m-a} \zeta(-m-a)\zeta(-m) + d^{-m} \zeta(-m)\zeta(-m-a) - \zeta(-m)\zeta(-m-a).
\end{align*}
If $m \geq 1$, then, since $a$ is odd, every term above is $0$, and consequently the residue \eqref{eqn:s=-m} is $0$ as well.  On the other hand, if $m=0$, then the above expression simplifies to $d^{-a}\zeta(0)\zeta(-a) = -\frac{d^{-a}}{2}\zeta(-a)$.  We then find for $m=0$ that \eqref{eqn:s=-m} evaluates to
\[
	\frac{-\zeta(-a)}{2} \sum_{d \mid n} \frac{n^b}{d^b}
		= -\frac{\zeta(-a)}{2} \sigma_b(n).
\]

\subsection{Error analysis via Kloosterman sums}

Applying the functional equation \eqref{eqn:hurwitz-fe} for both $\zeta(1-s-a,e_1/d)$ and $\zeta(1-s,e_2/d)$, we will be led to consider exponential sums of the form
\[
	S_n(m,k;d)
		:= \sum_{\substack{e_1,e_2 \pmod{d} \\ e_1 e_2 \equiv n \pmod{d}}} e\Big( \frac{me_1+ke_2}{d}\Big),
\]
where we write $e(x) := e^{2\pi i x}$ for any real $x$.  By relating these to classical Kloosterman sums 
we obtain the following strong bound.

\begin{lemma}\label{lem:kloosterman}
With notation as above, we have
\[
S_n(m,k;d) \ll_\epsilon d^{1/2+\epsilon} (d,k)^{1/2} (d,m)^{1/2}\]
for any $\epsilon > 0$.
\end{lemma}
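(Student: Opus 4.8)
The plan is to express $S_n(m,k;d)$ as an explicit finite combination of classical Kloosterman sums and then invoke Weil's bound. Throughout write $K(a,b;q) := \sum_{x \pmod q}^{\ast} e\bigl((ax + b\bar x)/q\bigr)$ for the standard Kloosterman sum, where the $\ast$ restricts the summation to $(x,q)=1$ and $\bar x$ denotes the inverse of $x$ modulo $q$; recall Weil's bound in the form $|K(a,b;q)| \ll_\epsilon q^{1/2+\epsilon}(a,b,q)^{1/2}$. The central claim I would establish first is the identity
\[
	S_n(m,k;d) = \sum_{\delta \mid (n,k,d)} \delta\, K\Bigl(m, \tfrac{nk}{\delta^2}; \tfrac d\delta\Bigr),
\]
after which the lemma follows from elementary bookkeeping.

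To prove the identity I would partition the sum defining $S_n(m,k;d)$ according to the exact value $\delta := (e_1,d)$. Writing $e_1 = \delta u$ with $u$ running over reduced residues modulo $d/\delta$, the congruence $e_1 e_2 \equiv n \pmod d$ is solvable only if $\delta \mid n$, in which case it pins down $e_2$ modulo $d/\delta$, namely $e_2 \equiv (n/\delta)\bar u \pmod{d/\delta}$, leaving exactly $\delta$ choices for $e_2 \pmod d$. Summing $e(ke_2/d)$ over these $\delta$ lifts produces a geometric sum equal to $\delta$ when $\delta \mid k$ and $0$ otherwise; this is precisely where the condition $\delta \mid (n,k,d)$ emerges. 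On the surviving terms the phase $e\bigl((me_1 + ke_2)/d\bigr)$ collapses to $e\bigl((mu + (nk/\delta^2)\bar u)/(d/\delta)\bigr)$, and summing over the reduced residues $u$ reassembles exactly $\delta\, K(m, nk/\delta^2; d/\delta)$. Summing over $\delta$ yields the identity. (As a sanity check, when $(n,d)=1$ only $\delta=1$ contributes and $S_n(m,k;d) = K(m,nk;d)$, matching the coprime case directly.)

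With the identity in hand I would bound each term by Weil. Using $|K(m, nk/\delta^2; d/\delta)| \ll_\epsilon (d/\delta)^{1/2+\epsilon}(m, nk/\delta^2, d/\delta)^{1/2}$ together with $\delta\,(d/\delta)^{1/2} = \delta^{1/2} d^{1/2}$, it suffices to control $\delta^{1/2}(m, nk/\delta^2, d/\delta)^{1/2}$, and two elementary facts finish the job: first, $(m, nk/\delta^2, d/\delta) \le (m,d)$, so this gcd contributes at most $(d,m)^{1/2}$; second, $\delta \mid (n,k,d) \mid (k,d)$, so $\delta^{1/2} \le (d,k)^{1/2}$. Hence each term is $\ll_\epsilon d^{1/2+\epsilon}(d,m)^{1/2}(d,k)^{1/2}$, and since the number of admissible $\delta$ is $\ll_\epsilon d^\epsilon$, summing over $\delta$ preserves the bound.

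The only genuinely delicate point is the derivation of the decomposition identity, specifically the careful handling of the non-invertible variables: one must track how the exact divisor $\delta = (e_1,d)$ simultaneously forces $\delta \mid n$ (for solvability) and $\delta \mid k$ (for survival of the inner geometric sum), and verify that the residual phase depends only on $u$ and on $e_2 \bmod d/\delta$, so that it genuinely recombines into a Kloosterman sum of modulus $d/\delta$ rather than something coarser. Once this reduction is secured, the appeal to Weil and the gcd estimates are routine, and the mild inefficiency in the final bound (for instance, in the coprime case the factor $(d,k)^{1/2}$ is larger than strictly needed) is harmless, since only an upper bound is required.
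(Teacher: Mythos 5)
Your proposal is correct and follows essentially the same route as the paper: the identity $S_n(m,k;d) = \sum_{\delta \mid (d,n,k)} \delta\, K(m, nk/\delta^2; d/\delta)$ obtained by stratifying over $\delta = (e_1,d)$ is exactly the paper's decomposition, and the subsequent application of the Weil bound with the observations $\delta \le (d,k)$ and $(m, nk/\delta^2, d/\delta) \le (d,m)$ matches the paper's estimate. No substantive differences.
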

\begin{proof}
Recall that the classical Kloosterman sums are defined by
\[
	K(a,b;q) := S_1(a,b;q) = \sum_{ xy \equiv 1 \pmod{q}} e\left(\frac{ax+by}{q}\right).
\]
We begin by proving the identity
\[
	S_n(m,k;d)
		= \sum_{f \mid (d,n,k)} f ~K(m, kn/f^2; d/f),
\]

For $e_1$ as in the sum defining $S_n(m,k;d)$, let $f = (e_1,d)$, and note that there are no terms with $f \nmid (d,n)$.  Write $e_1 = e_1^\prime f$, where $(e_1^\prime, d/f) = 1$.  Let $e_2^\prime$ be such that $e_1^\prime e_2^\prime \equiv 1\pmod{d/f}$, so that the allowed values of $e_2 \pmod{d}$ are given by $e_2 = e_2^\prime n/f + jd/f$ for $0 \leq j \leq f-1$.

Thus, we find
\begin{align*}
S_n(m,k;d)
	&= \sum_{f \mid (d,n)} \sum_{e_1^\prime e_2^\prime \equiv 1 \pmod{\frac{d}{f}}} e\left(\frac{m e_1^\prime f + kne_2^\prime/f}{d}\right)\sum_{j=0}^{f-1} e\left(\frac{jk}{f}\right) \\
	&= \sum_{f \mid (d,n,k)} f \sum_{e_1^\prime e_2^\prime \equiv 1 \pmod{\frac{d}{f}}} e\left(\frac{me_1^\prime + kne_2^\prime /f^2}{d/f}\right) \\
	&= \sum_{f \mid (d,n,k)} f ~K(m,kn/f^2;d/f),
\end{align*}
as claimed.

Now apply the Weil bound $|K(a,b;q)| \leq \tau(q)q^{1/2}\mathrm{gcd}(a,b,q)^{1/2}$ to conclude
\begin{align*}
|S_n(m,k;d)|
	&\leq  \sum_{f \mid (d,n,k)} d^{1/2} f^{1/2} \tau\Big(\frac{d}{f}\Big) \mathrm{gcd}\Big(m,\frac{kn}{f^2},\frac{d}{f}\Big)^{1/2} \\
	&\ll_\epsilon d^{1/2+\epsilon} (d,k)^{1/2} (d,m)^{1/2},
\end{align*}
as desired.
\end{proof}

We first assume that $b > a + 3/2$.  We will shift the contour in \eqref{eqn:sab-hurwitz} to the line $\Re(s)=1-\delta$ for some $\delta > 1$.  Using Stirling's formula, along the line $\Re(s)=1-\delta$ for $\delta>1$, the integrand in \eqref{eqn:sab-hurwitz} is
\[
	\ll_{a,b,\delta} (1+|t|)^{a-b+2\delta-2}\sum_{d \geq 1} \frac{n^{b+1-\delta}}{d^{b-a+2-2\delta}} \sum_{k,m\geq 1} \frac{|S_n(m,k;d)|+|S_n(m,-k;d)|}{m^\delta k^{\delta + a}}. 
\]
The integral \eqref{eqn:sab-hurwitz} thus converges absolutely on the line $\Re(s)=1-\delta$ provided that $\delta < \frac{b-a+1}{2}$.  This is compatible with the assumption that $\delta>1$ by the assumption $b>a+3/2$.

Using Lemma \ref{lem:kloosterman}, 
the integral in \eqref{eqn:sab-hurwitz}, evaluated on the line $\Re(s)=1-\delta$, is
\[
	\ll_{a,b,\delta,\epsilon} \sum_{d \geq 1} \frac{n^{b+1-\delta}}{d^{b-a+\frac{3}{2}-2\delta-\epsilon}},
\]
by the assumption that $\delta > 1$.  Since $b>a+\frac{3}{2}$, we take $\delta = \frac{b-a}{2}+\frac{1}{4}-\epsilon$ and conclude the integral is
\[
	\ll_{a,b,\epsilon} n^{\frac{a+b}{2}+\frac{3}{4} + \epsilon} \sum_{d \geq 1} \frac{1}{d^{1+\epsilon}} \ll_{a,b,\epsilon} n^{\frac{a+b}{2}+\frac{3}{4} + \epsilon}.
\]
Together with the analysis of the poles, this yields the first part of Theorem \ref{thm:st-hurwitz}.

Now, assume that $b > \max\{a,2-a\}$.  Our goal in this case is to show that the contour in \eqref{eqn:sab-hurwitz} may be shifted to the line $\Re(s)=\sigma$ for some $0 < \sigma < 1$.  This is equivalent to obtaining sufficient cancellation in the series
\begin{equation} \label{eqn:afe-setup}
	\sum_{d \geq 1} d^{a-b-2s} \sum_{\substack{ 1 \leq e_1,e_2 \leq d \\ e_1e_2 \equiv n\pmod{d}}} \zeta(s-a, e_1/d) \zeta(s, e_2/d)
\end{equation}
on the line $\Re(s)=\sigma$.  We shall find it convenient to assume that $\sigma < a$ so that $\zeta(s-a,e_1/d)$ is related to an absolutely convergent Dirichlet series via the functional equation \eqref{eqn:hurwitz-fe}.  For $\zeta(s,e_2/d)$, we do not have this luxury, so we instead invoke the approximate functional equation of Lemma \ref{lem:AFE}.

In principle, in applying the functional equation for $\zeta(s-a,e_1/d)$ and the approximate functional equation for $\zeta(s,e_2/d)$, we are forced to consider six summations, corresponding to pairing each of the two terms in \eqref{eqn:hurwitz-fe} with the three terms in Lemma \ref{lem:AFE}.  However, the two summations in \eqref{eqn:hurwitz-fe} have the same shape as each other, as do the second and third summations in Lemma \ref{lem:AFE}.  Consequently, it essentially suffices to consider only two types of summation, corresponding to pairing the first term from Lemma \ref{lem:AFE} with a term from \eqref{eqn:hurwitz-fe} or pairing one of the latter two terms from Lemma \ref{lem:AFE} with a term from \eqref{eqn:hurwitz-fe}.

In the first of these two cases, where the first term of Lemma \ref{lem:AFE} for $\zeta(s,e_2/d)$ is paired with one of the terms in \eqref{eqn:hurwitz-fe} for $\zeta(s-a,e_1/s)$, we are led to consider series of the form
\begin{align}\label{eqn:afe-series-1}
\sum_d \frac{1}{d^{b-a+2s}} &\sum_{\substack{ 1 \leq e_1,e_2 \leq d \\ e_1e_2 \equiv n\pmod{d}}} \sum_{0 \leq k \leq T} \sum_{m \geq 1} \frac{e\big(\frac{me_1}{d}\big)}{(k+e_2/d)^s m^{1+a-s}} \\
	\notag & = \sum_d \frac{1}{d^{b-a+s}} \sum_{k \leq d(T+1)} \sum_{m \geq 1} \frac{1}{k^s m^{1+a-s}} \sum_{e_1 k \equiv n \pmod{d}} e\left( \frac{me_1}{d}\right),
\end{align}
where, as in Lemma \ref{lem:AFE}, we have set $T = \sqrt{2\pi(1+|t|)}$.  The exponential sum in \eqref{eqn:afe-series-1} is $0$ unless $(d,k) \mid (n,d,m)$, in which case it is of absolute value $(d,k)$.  Thus, since we have assumed $\Re(s)=\sigma < a$, \eqref{eqn:afe-series-1} is bounded by
\begin{align}\label{eqn:afe-bound-1}
\sum_{d\geq 1} \frac{1}{d^{b-a+\sigma}} \sum_{k \leq d(T+1)} \sum_{m \geq 1} \frac{(d,k)}{k^\sigma m^{1+a-\sigma}}
	&\ll \sum_{d \geq 1} \frac{1}{d^{b-a+\sigma}} \sum_{f \mid d} f^{1-\sigma} \left(\frac{Td}{f}\right)^{1-\sigma} \\
	\notag &\ll T^{1-\sigma} \sum_{d \geq 1} \frac{1}{d^{b-a+2\sigma-1-\epsilon}} \\
	\notag &\ll T^{1-\sigma} \\
	\notag &\ll (1+|t|)^{\frac{1-\sigma}{2}},
\end{align}
provided that 
$
	\sigma  > 1 - \frac{b-a}{2}.
$  
Since we have assumed $b>2-a$, there is some $\sigma < a$ for which this holds.  Using Stirling's formula, the additional factors in \eqref{eqn:hurwitz-fe} 
as applied to $\zeta(s - a, e_1/d)$
coming from the gamma function and exponentials may be bounded by $O((1+|t|)^{a-\sigma+\frac{1}{2}})$.  Altogether, the contribution to \eqref{eqn:afe-setup} from the first term in the approximate functional equation for $\zeta(s,e_2/d)$ is seen to be $O((1+|t|)^{a-\frac{3\sigma}{2}+1})$. 

We now consider the second type of summation, arising from the second and third terms in the approximate functional equation.  In particular, we are led to estimate
\begin{align}\label{eqn:afe-series-2}
\sum_d \frac{1}{d^{b-a+2s}}  \sum_{m \geq 1} \sum_{k \leq T} \frac{1}{k^{1-s} m^{a+1-s}} & \sum_{\substack{1 \leq e_1,e_2 \leq d \\ e_1e_2 \equiv n \pmod{d}}} e\left(\frac{\pm me_1\pm ke_2}{d}\right) \\
	\notag &=\sum_d \frac{1}{d^{b-a+2s}} \sum_{m \geq 1} \sum_{k \leq T} \frac{S_n(\pm m,\pm k;d)}{k^{1-s} m^{a+1-s}}.
\end{align}
We appeal to Lemma \ref{lem:kloosterman} to conclude that \eqref{eqn:afe-series-2} is bounded by
\begin{align}\label{eqn:afe-bound-2}
\sum_{d \geq 1} \frac{1}{d^{b-a+2\sigma}} \sum_{m \geq 1} \sum_{k \leq T} \frac{d^{1/2+\epsilon}(m,d)^{1/2}(k,d)^{1/2}}{k^{1-\sigma} m^{a+1-\sigma}}
	&\ll \sum_{d \geq 1} \frac{1}{d^{b-a+2\sigma-1/2-\epsilon}} \sum_{f \mid d} f^{\sigma-\frac{1}{2}} \left(\frac{T}{f}\right)^{\sigma} \\
	\notag & \ll T^\sigma \sum_{d \geq 1} \frac{1}{d^{b-a+2\sigma-1/2-\epsilon}} \\
	\notag & \ll T^\sigma \\
	\notag & \ll (1+|t|)^{\frac{\sigma}{2}}.
\end{align}
Once again, the additional factors in \eqref{eqn:hurwitz-fe} are of size $O((1+|t|)^{a-\sigma+\frac{1}{2}}$, while those in Lemma \ref{lem:AFE} are seen to be $O((1+|t|)^{\frac{1}{2}-\sigma})$.  We thus find that terms arising from the second and third summations in Lemma \ref{lem:AFE} contribute an amount that is $O((1+|t|)^{a-\frac{3\sigma}{2}+1})$ to \eqref{eqn:afe-setup}, matching the contribution from those terms arising from the first summation in Lemma \ref{lem:AFE}.  The error terms in Lemma \ref{lem:AFE} contribute a smaller amount, and we conclude that on the line $\Re(s)=\sigma$,
\begin{equation} \label{eqn:afe-conclusion}
	\sum_{d \geq 1} d^{a-b-2s} \sum_{\substack{ 1 \leq e_1,e_2 \leq d \\ e_1e_2 \equiv n\pmod{d}}} \zeta(s-a, e_1/d) \zeta(s, e_2/d)
		\ll (1+|t|)^{a - \frac{3\sigma}{2}+1},
\end{equation}
provided that $1 - \frac{b-a}{2} < \sigma < a$.

Thus, estimating the quotient of gamma factors by
Lemma \ref{lem:gamma_quotient},  the integrand in \eqref{eqn:sab-hurwitz} is $O_{a,b,\sigma}(n^{b+\sigma} (1+|t|)^{a-b-\frac{3\sigma}{2}})$.  The integral therefore converges absolutely on the line 
$\Re(s) = 1 - \frac{b-a}{2}+\epsilon$ for any $\epsilon>0$. This yields the second part of the theorem when
$\max\{a,2-a\} < b\leq  a + 3/2$.

\subsection{A simpler version of the error analysis}\label{sec:easy}
We present an alternative treatment of the error that avoids the complications
of the last section, obtaining a weaker error term of 
$o(n^{b + 1})$ for some ranges of the parameters. In particular, we assume that $a > 1$ and $b>a+2$.

Shift the contour in \eqref{eqn:sab-hurwitz} to 
$\Re(s) = 1 - \epsilon$ for small $\epsilon > 0$. We have $\zeta(s - a, e_1/d) \ll (1 + |t|)^{a - \frac{1}{2} + \epsilon}$
by the functional equation and Stirling's formula; we have 
$\zeta(s, e_2/d) \ll (1 + |t|)^{\epsilon} \cdot \big( \frac{e_2}{d} \big)^{-1}$ by the convexity bound, with the
term $\big( \frac{e_2}{d} \big)^{-1}$ arising from the first term $(e_2/d)^{-s}$ of 
 $\zeta(s, e_2/d)$; and 
we again use Lemma \ref{lem:gamma_quotient} to estimate the quotient of gamma functions. 

We conclude that the integrand is
\[
	\ll \sum_{d \geq 1} \frac{n^{b+1-\epsilon}}{d^{b-a-1-2\epsilon}} (1+|t|)^{a-b-\frac{3}{2}+2\epsilon}.
\]
This yields an error term of $O(n^{b + 1 - \epsilon})$ provided that the sum over $d$ and the integral over $t$ converge.  These conditions are satisfied for some $\epsilon>0$ if $b - a > 2$.

\section{Possible improvements}

As made clear in the discussion surrounding Lemma \ref{lem:kloosterman}, the error term in Theorem \ref{thm:st2} is controlled by sums of {Kloosterman sums} $K(r,s;q)$, where $q$ denotes the modulus.  The Weil bound implies that $K(r,s;q) \ll q^{1/2+\epsilon}$, and this is a key ingredient in the proof.  However, it is expected that much greater cancellation holds on average.  We expect that if the estimate $K(r,s;q) \ll q^{\theta+\epsilon}$ holds on average for some $0 \leq \theta \leq 1/2$, then the error term in Theorem \ref{thm:st2} may be improved to $O(n^{\frac{a+b}{2}+\frac{1+\theta}{2}+\epsilon})$.  Assuming a conjecture of Selberg \cite{Selberg}, the value $\theta=0$ is likely admissible, and this would yield a Ramanujan--Deligne quality error term in Theorem \ref{thm:st2}.  Using work of Deshouillers and Iwaniec \cite{DeshouillersIwaniec} on sums of Kloosterman sums, we speculate it may be possible to improve the error in Theorem \ref{thm:st2}, perhaps to the level $O(n^{\frac{a+b}{2}+\frac{7}{12}+\epsilon})$.
Alternatively, Shparlinski suggested to us that his work with Zhang \cite{SZ} on cancellation amongst Kloosterman sums to prime moduli could be readily generalized to the composite case
without difficulty, again leading to possible improvements. We leave these questions for future work.

Finally, as P.~Humphries pointed out to us, these questions can also be addressed via the spectral theory of automorphic forms. We refer to Kuznetsov \cite{kuznetsov} and Motohashi \cite{motohashi}
for some related results along these lines, including a treatment by Motohashi of the case $a = b = 0$. Humphries suggested to us that these techniques may be able to address complex $a$ and $b$
in greater generality, and again we leave this question for future work.

\bibliographystyle{abbrv}
\bibliography{references}

\end{document}